\numberwithin{equation}{section}
\newtheorem{theorem}{Theorem}[section]
\newtheorem{prop}[theorem]{Proposition}
\newtheorem{lemma}[theorem]{Lemma}
\theoremstyle{definition}
\newcommand{\Mm}{\mathcal{M}}
\newcommand{\R}{\mathbb{R}}
\newcommand{\Rn}{\mathbb{R}^n}
\let \a=\alpha
\let \O=\Omega
\let \ve=\varepsilon
\let \si=\sigma
\let \d=\delta
\let \la=\lambda
\let \o=\omega
\let \e=\epsilon
\begin{document}
\title[Sharp weighted bounds]
{Sharp weighted bounds for multilinear maximal functions and Calder\'on-Zygmund operators}
\authors
\author[W. Dami\'an]{Wendol\'in Dami\'an}
\address{Wendol\'in Dami\'an\\
Departamento de An\'alisis Matem\'atico, Facultad de Matem\'aticas,
Universidad de Sevilla, 41080 Sevilla, Spain}
\email{wdamian@us.es}
\author[A.K. Lerner]{Andrei K. Lerner}
\address{Andrei K. Lerner\\ Department of Mathematics,
Bar-Ilan University, 52900 Ramat Gan, Israel}
\email{aklerner@netvision.net.il}
\author[C. P\'erez]{Carlos P\'erez}
\address{Carlos P\'erez\\
Departamento de An\'alisis Matem\'atico, Facultad de Matem\'aticas,
Universidad de Sevilla, 41080 Sevilla, Spain}
\email{carlosperez@us.es}

\keywords{Multilinear maximal operator, Calder\'on-Zygmund theory,
sharp weighted bounds.}
\subjclass[2010]{42B20, 42B25}

%
%
\begin{abstract}
In this paper we prove some sharp weighted norm inequalities for the multi(sub)linear maximal function $\Mm$ introduced in \cite{LOPTT}
and for multilinear Calder\'on-Zygmund operators.
In particular we obtain a sharp mixed ``$A_p-A_{\infty}$" bound for $\Mm$, some partial results related to a Buckley-type estimate for $\Mm$,
and a sufficient condition for the boundedness of $\Mm$ between weighted $L^p$ spaces with different weights taking into account the precise bounds.

Next we get a bound for multilinear Calder\'on-Zygmund operators in terms of dyadic positive multilinear operators in the spirit of the recent work \cite{Ler1}.
Then we obtain a multilinear version of the ``$A_2$ conjecture". Several open problems are posed.
\end{abstract}
\maketitle
%
%
\section{Introduction}

The modern theory of weighted norm inequalities for some of the main operators in Harmonic Analysis originated in the beginning of the 70's in the works by
R. Hunt, B. Muckenhoupt, R. Wheeden, R. Coifman and C. Fefferman. In particular, it was
realized that the key role in this theory is played by the so-called $A_p$ condition.
Much later, the question about the sharp dependence of the $L^p(w)$ operator norm in term of the the $A_p$ constant or     characteristic of the weight appeared. First, for the Hardy-Littlewood maximal operator this problem was solved by S. Buckley \cite{Bu}.

It turned out that for singular integrals the question is much more complicated. In \cite{Pet1}, S. Petermichl solved it
for the Hilbert transform. Recently, T. Hyt\"onen \cite{Hyt} gave a complete solution for general Calder\'on-Zygmund operators
solving the so-called $A_2$ conjecture.
A bit later  this result was improved in \cite{HP} in the case $p=2$ and  for general $p$ in \cite{HL}. A further improvement was obtained in \cite{HLP} where a non-probablistic proof was found together with a $q$-variation estimate.  We refer to these papers for a more detailed history and for some other closely related results like \eqref{hype}.

The aim of this paper is to give some multilinear analogues of the above mentioned results in the spirit of the theory of multiple
weights developed recently in \cite{LOPTT}. We introduce some notation. Given $\vec f=(f_1,\dots,f_m)$, we define the multi(sub)linear
maximal operator $\mathcal M$ by
$$\mathcal M(\vec f\,)(x)=\sup_{Q\ni x}\prod_{i=1}^m\frac{1}{|Q|}\int_Q|f_i(y_i)|dy_i,$$
where the supremum is taken over all cubes $Q$ containing $x$. It is shown in that paper that this operator controls in several ways the class of multilinear Calder\'on-Zygmund operators (see section \ref{MCZO}).  A particular instance of this intimate relationship is the class of weights characterizing the weighted $L^p$ spaces for which both operators are bounded. To define this class of weights we let  $\vec w=(w_1,\dots,w_m)$ and $\vec P=(p_1,\dots, p_m)$.
Set $\frac{1}{p}=\frac{1}{p_1}+\dots+\frac{1}{p_m}$ and $\nu_{\vec w}=\prod_{i=1}^mw_i^{p/p_i}$. We say that $\vec w$ satisfies the
$A_{\vec P}$ condition if
$$[\vec w]_{A_{\vec P}}=\sup_{Q}\Big(\frac{1}{|Q|}\int_Q\nu_{\vec
w}\Big)\prod_{i=1}^m\Big(\frac{1}{|Q|}\int_Q
w_i^{1-p'_i}\Big)^{p/p'_i}<\infty.
$$
It is easy to see that in the linear case (that is, if $m=1$) $[\vec w]_{A_{\vec P}}=[w]_{A_p}$ is the usual $A_p$ constant.
In \cite{LOPTT} the following multilinear extension of the Muckenhoupt $A_p$ theorem for the maximal function was obtained:  the inequality
\begin{equation}\label{strong}
\|{\mathcal M}(\vec f\,)\|_{L^{p}(\nu_{\vec w})}\le C \prod_{i=1}^m\|f_i\|_{L^{p_i}(w_i)}
\end{equation}
holds for every  $\vec f$ if and only if $\vec w$ satisfies the $A_{\vec P}$ condition.

The first question we are going to study is the question about the sharp dependence of $C$ in (\ref{strong}).
In the standard situation, namely when $m=1$, two different types of sharp weighted inequalities for the Hardy-Littlewood maximal operator $M$ are known.
First, Buckley's theorem \cite{Bu} says that for any $1<p<\infty$,
\begin{equation}\label{buckley}
\|M\|_{L^p(w)}\le C_{n,p}[w]_{A_p}^{\frac{1}{p-1}}.
\end{equation}

Until very recently this result was thought to be sharp since the exponent of
$[w]_{A_p}$ cannot be improved. However, recently T. Hyt\"onen and the third author \cite{HP} (see also \cite{HPR} for another proof)  showed that (\ref{buckley}) can be improved in the following way. Define the $A_{\infty}$ constant of $w$ by
$$[w]_{A_{\infty}}=\sup_Q\frac{1}{w(Q)}\int_QM(w\chi_Q).$$
This definition goes back to the characterization of the $A_{\infty}$ class of weights given by N. Fujii \cite{Fu} (see also M. Wilson \cite{Wil1}). It is not difficult to show that $[w]_{A_{\infty}}\le c_{n}[w]_{A_r}$ for any $r\ge 1$.
It was shown in \cite{HP} that
\begin{equation}\label{hype}
\|M\|_{L^p(w)}\le C_{n,p}([w]_{A_p}[\sigma]_{A_{\infty}})^{1/p},
\end{equation}
where $\si=w^{1-p'}$. This estimate implies (\ref{buckley}) since $[\sigma]_{A_{\infty}}\le c[\sigma]_{A_{p'}}=c[w]_{A_p}^{\frac{1}{p-1}}$, and therefore
$([w]_{A_p}[\sigma]_{A_{\infty}})^{1/p}\le c[w]_{A_p}^{\frac{1}{p-1}}$.

Our multilinear results related to (\ref{buckley}) and (\ref{hype}) are somewhat surprising. They show that contrary to the  case $m=1,$ multilinear versions of (\ref{buckley}) and (\ref{hype}) are independent of each other.
We are able to get a full analogue of (\ref{hype}). However such an analogue does not yield an expected full analogue of (\ref{buckley}).
Our first main result is the following.

\begin{theorem}\label{Main_thm}
Let $1<p_i<\infty,i=1,\dots,m$ and $\frac{1}{p}=\frac{1}{p_1}+\ldots+\frac{1}{p_m}$. Then the inequality
\begin{equation}\label{OneWeightBestConstant}
\|{\mathcal M}(\vec f\,)\|_{L^{p}(\nu_{\vec w})}\le C_{n,m, \vec P}\,[\vec w]_{A_{\vec P}}^{\frac{1}{p}}\,
\prod_{i=1}^m ([\sigma_i]_{A_{\infty}})^{\frac{1}{p_i}} \, \prod_{i=1}^m\|f_i\|_{L^{p_i}(w_i)}
\end{equation}
holds if $\vec w\in A_{\vec P}$, where $\sigma_i=w_i^{1-p_i'}$, $i=1,\ldots,m$.  Furthermore the exponents are sharp in the sense that they
cannot be replaced by smaller ones.
\end{theorem}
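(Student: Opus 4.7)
The plan is to extend the dyadic proof of \eqref{hype} given by Hyt\"onen and the third author in \cite{HP} to the multilinear setting. First, by the standard three-lattice argument, $\Mm$ is pointwise comparable to finitely many dyadic analogues $\Mm^{\Dd}$, so it suffices to prove \eqref{OneWeightBestConstant} with $\Mm^{\Dd}$. Choosing $a>2^{mn}$, I introduce the level sets $\O_k=\{\Mm^{\Dd}(\vec f\,)>a^k\}$ with maximal dyadic cubes $\{Q_j^k\}_j$, together with the pairwise disjoint principal sets $E_j^k:=Q_j^k\setminus\O_{k+1}$, which satisfy $|E_j^k|\ge c_{n,m,a}|Q_j^k|$. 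Stacking levels and using the defining average inequality on each $Q_j^k$ gives the discretization
\[
\|\Mm^{\Dd}(\vec f\,)\|_{L^p(\nu_{\vec w})}^{p}\le C\sum_{k,j}\nu_{\vec w}(E_j^k)\prod_{i=1}^m\Big(\frac{1}{|Q_j^k|}\int_{Q_j^k}|f_i|\Big)^{p}.
\]

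Setting $g_i:=f_i\sigma_i^{-1}$, I rewrite each average as $\frac{\sigma_i(Q)}{|Q|}\cdot\frac{1}{\sigma_i(Q)}\int_Q g_i\sigma_i$, producing the factor $\prod_i(\sigma_i(Q_j^k)/|Q_j^k|)^p$. The key algebraic step is to split $p=p/p_i'+p/p_i$ in the exponents: the part with $p/p_i'$, combined with $\nu_{\vec w}(E_j^k)\le\nu_{\vec w}(Q_j^k)$ and with the identity $\sum_i p/p_i=1$, produces exactly the quantity defining $[\vec w]_{A_{\vec P}}$, while the part with $p/p_i$ leaves behind $\prod_i\sigma_i(Q_j^k)^{p/p_i}$. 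The right-hand side is then at most
\[
C[\vec w]_{A_{\vec P}}\sum_{k,j}\prod_{i=1}^m\Big[\sigma_i(Q_j^k)\Big(\frac{1}{\sigma_i(Q_j^k)}\int_{Q_j^k}g_i\sigma_i\Big)^{p_i}\Big]^{p/p_i}.
\]
Because $\sum_i p/p_i=1$, an application of H\"older's inequality for sums with exponents $p_i/p$ decouples this into $\prod_i S_i^{p/p_i}$, where each $S_i$ is the sum of $\sigma_i(Q_j^k)\bigl(\frac{1}{\sigma_i(Q_j^k)}\int_{Q_j^k}g_i\sigma_i\bigr)^{p_i}$ over $k,j$.

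To estimate each $S_i$ I invoke the Hyt\"onen--P\'erez $A_\infty$ lemma from \cite{HP}: the Lebesgue-sparseness $|E_j^k|\ge c|Q_j^k|$ together with $\sigma_i\in A_\infty$ yields $\sigma_i(Q_j^k)\le C_n[\sigma_i]_{A_\infty}\sigma_i(E_j^k)$, after which the pointwise bound $\frac{1}{\sigma_i(Q_j^k)}\int_{Q_j^k}g_i\sigma_i\le M_{\sigma_i}^{\Dd}g_i(x)$ on the disjoint sets $E_j^k$ plus Doob's inequality on $L^{p_i}(\sigma_i)$ (with constant $p_i'$) gives $S_i\le C_n[\sigma_i]_{A_\infty}\|g_i\|_{L^{p_i}(\sigma_i)}^{p_i}$. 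Since $|g_i|^{p_i}\sigma_i=|f_i|^{p_i}w_i$, collecting exponents gives \eqref{OneWeightBestConstant}.

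For sharpness, when $m=1$ the bound reduces to \eqref{hype}, whose sharpness is already in \cite{HP}. For $m\ge2$ I would build tensor-type one-parameter families of power weights $w_i(x_i)=|x_i|^{(p_i-1)(1-\d)}$ on separate coordinate strips, together with compactly supported $f_i$, and verify that as $\d\to0^+$ both $[\vec w]_{A_{\vec P}}^{1/p}\prod_i[\sigma_i]_{A_\infty}^{1/p_i}$ and the ratio $\|\Mm(\vec f\,)\|_{L^p(\nu_{\vec w})}/\prod\|f_i\|_{L^{p_i}(w_i)}$ blow up at the same rate, and that this remains true when each power of $\d$ is tuned independently, ruling out any smaller exponent. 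The main obstacle throughout is the bookkeeping in the second paragraph: the splitting $p=p/p_i'+p/p_i$ and the identity $\sum_i p/p_i=1$ must be combined in just the right way so that $[\vec w]_{A_{\vec P}}$ appears with power exactly~$1$ and so that the residual geometric quantity has the right homogeneity for H\"older decoupling and the $A_\infty$ lemma; once this algebra is arranged, the multilinear proof parallels the linear one in \cite{HP}.
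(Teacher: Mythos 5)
Your discretization and algebraic bookkeeping are correct and the overall plan is a legitimate alternative to the paper's argument: the paper extracts the $[\sigma_i]_{A_\infty}$ factors by choosing exponents $\alpha_i=(p_i'r_i)'$ coming from the sharp reverse H\"older inequality \eqref{AInfty_RHI} and then uses the boundedness of the \emph{unweighted} maximal function $M$ on $L^{p_i/\alpha_i}$, whereas you split $p=p/p_i'+p/p_i$, decouple with H\"older, and aim for a Carleson-type estimate for each $\sigma_i$ together with the weighted dyadic maximal function $M^{\Dd}_{\sigma_i}$. Both routes can be made to work, and yours is structurally closer to the Sawyer-testing/Carleson-embedding proof of \eqref{hype}.

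However, the specific lemma on which your proof hinges is false. Lebesgue sparseness $|E_j^k|\ge c|Q_j^k|$ together with $\sigma_i\in A_\infty$ does \emph{not} yield the pointwise bound $\sigma_i(Q_j^k)\le C_n[\sigma_i]_{A_\infty}\sigma_i(E_j^k)$ for each individual cube. In one dimension, take $w=\chi_{(0,1/2)}+\delta\chi_{(1/2,1)}$ (extended periodically), $Q=(0,1)$, $E=(1/2,1)$; then $w(Q)/w(E)\approx 1/\delta$, but a direct computation with the Fujii--Wilson constant gives $[w]_{A_\infty}\approx\log(1/\delta)$, so the claimed inequality fails badly as $\delta\to 0$. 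What the Fujii--Wilson $A_\infty$ condition \emph{does} give in combination with sparseness is a Carleson condition: for every dyadic $R$,
\[
\sum_{Q_j^k\subset R}\sigma_i(Q_j^k)\le 2\sum_{Q_j^k\subset R}\frac{\sigma_i(Q_j^k)}{|Q_j^k|}\,|E_j^k|\le 2\int_R M(\sigma_i\chi_R)\,dx\le 2[\sigma_i]_{A_\infty}\,\sigma_i(R),
\]
and it is the dyadic Carleson embedding theorem (whose proof uses precisely the Doob inequality you had in mind, with the constant $(p_i')^{p_i}$) that then gives $S_i\le C_{p_i}[\sigma_i]_{A_\infty}\|g_i\|^{p_i}_{L^{p_i}(\sigma_i)}$. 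So your conclusion for $S_i$ is right, but the justification of this key step must be replaced by the Carleson-embedding route; as written, the argument relies on a false statement.

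The sharpness paragraph is a plan rather than a proof. The paper's example is simpler and suffices: take $n=1$, $w_i(x)=|x|^{(1-\ve)(p_i-1)}$ and $f_i(x)=x^{-1+\ve}\chi_{(0,1)}(x)$ for \emph{all} $i$ (same $\ve$, same variable), in which case $[\vec w]_{A_{\vec P}}\approx\ve^{-(mp-1)}$, $[\sigma_i]_{A_\infty}\lesssim\ve^{-1}$, $\prod_i\|f_i\|_{L^{p_i}(w_i)}=\ve^{-1/p}$, and $\|\mathcal M(\vec f)\|_{L^p(\nu_{\vec w})}\gtrsim\ve^{-(m+1/p)}$, so the two sides of \eqref{OneWeightBestConstant} are comparable; this single one-parameter family already shows the stated exponents cannot be lowered, and the tensor construction over separate coordinate strips is unnecessary.
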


Even though the result of this  theorem is sharp it would be of interest to find an extension of Buckley's estimate \eqref{buckley}. However this task seems to be more complicated. Currently we can get only several partial results expressed in the following theorem.

\begin{theorem}\label{multbuckley}
Let $1< p_i<\infty,i=1,\dots,m$ and $\frac{1}{p}=\frac{1}{p_1}+\ldots+\frac{1}{p_m}$.
Denote by $\a=\a(p_1,\dots,p_m)$ the best possible power in
\begin{equation}\label{sharpbuck}
\|{\mathcal M}(\vec f\,)\|_{L^{p}(\nu_{\vec w})}\le C_{n,m,p}\, [\vec w]_{A_{\vec P}}^{\a}\,\prod_{i=1}^m\|f_i\|_{L^{p_i}(w_i)}.
\end{equation}
Then we have the following results:
\begin{enumerate}
\renewcommand{\labelenumi}{(\roman{enumi})}
\item for all $1<p_1,\dots,p_m<\infty$, $\frac{m}{mp-1}\le \a\le \frac{1}{p}\Big(1+\sum_{i=1}^m\frac{1}{p_i-1}\Big);$
\item if $p_1=p_2=\dots=p_m=r>1$,\, then \,$\a=\frac{m}{r-1}$.
\end{enumerate}

\end{theorem}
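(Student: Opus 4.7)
The upper and lower bounds in (i) will be proved separately, and (ii) will follow by showing that the lower bound of (i) is in fact attained when $p_1=\dots=p_m=r$.

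\emph{Lower bound in (i).} I would saturate (\ref{sharpbuck}) by a one-parameter family of power weights of Buckley type. Set $0<\epsilon\ll 1$ and, on $\Rn$,
\[
w_i(x)=|x|^{n(p_i-1)(1-\epsilon)},\qquad f_i(x)=\sigma_i(x)\chi_{B(0,1)}(x),
\]
so that $\sigma_i(x)=|x|^{-n(1-\epsilon)}$ and $\nu_{\vec w}(x)=|x|^{n(mp-1)(1-\epsilon)}$. A direct computation on balls centered at the origin (using $\sum_i p/p_i'=mp-1$) yields $[\vec w]_{A_{\vec P}}\sim \epsilon^{-(mp-1)}$, $\|f_i\|_{L^{p_i}(w_i)}\sim \epsilon^{-1/p_i}$, and, via the pointwise lower bound $\Mm(\vec f\,)(x)\gtrsim |x|^{-nm(1-\epsilon)}/\epsilon^m$ on $B(0,1/2)$, $\|\Mm(\vec f\,)\|_{L^p(\nu_{\vec w})}\gtrsim \epsilon^{-(mp+1)/p}$. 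Substituting into (\ref{sharpbuck}) and letting $\epsilon\to 0^+$ forces $\epsilon^{-m}\lesssim \epsilon^{-(mp-1)\a}$, i.e. $\a\ge m/(mp-1)$.

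\emph{Upper bound in (i).} Here I would apply Theorem \ref{Main_thm} and convert the factor $\prod_i[\sigma_i]_{A_\infty}^{1/p_i}$ into a power of $[\vec w]_{A_{\vec P}}$. Raising the defining $A_{\vec P}$ inequality to the power $p_i'/p$ and applying H\"older to isolate $\sigma_i$ from the remaining $\sigma_j$ and $\nu_{\vec w}$ yields the self-improvement of \cite{LOPTT},
\[
[\sigma_i]_{A_{mp_i'}}\le c_{n,m}\,[\vec w]_{A_{\vec P}}^{p_i'/p}.
\]
Combined with the Fujii--Wilson comparison $[\sigma_i]_{A_\infty}\le c_n[\sigma_i]_{A_{mp_i'}}$ and plugged into (\ref{OneWeightBestConstant}), this gives
\[
\prod_{i=1}^m[\sigma_i]_{A_\infty}^{1/p_i}\le C\,[\vec w]_{A_{\vec P}}^{\sum_i p_i'/(p p_i)}=C\,[\vec w]_{A_{\vec P}}^{\frac{1}{p}\sum_i\frac{1}{p_i-1}},
\]
since $p_i'/p_i=1/(p_i-1)$, and hence $\a\le \frac{1}{p}\bigl(1+\sum_i\frac{1}{p_i-1}\bigr)$.

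\emph{Part (ii).} The lower bound $\a\ge m/(r-1)$ is immediate from (i), since $mp-1=r-1$ when $p_i=r$. The substance is the matching upper bound: note that (i) only yields the exponent $m(m+r-1)/(r(r-1))$, overshooting $m/(r-1)$ by the factor $(m+r-1)/r$. To close this gap I would bypass the coordinate-wise route of (i) and run a sparse (principal-cube) decomposition directly on $\Mm(\vec f\,)$. Concretely: reduce to a single-grid dyadic model $\Mm^{\Dd}$, extract a sparse family $\Ss\subset\Dd$ satisfying
\[
\Mm(\vec f\,)(x)\lesssim \sum_{Q\in\Ss}\Big(\prod_{i=1}^m\tfrac{1}{|Q|}\int_Q|f_i|\Big)\chi_Q(x),
\]
and control the resulting $L^p(\nu_{\vec w})$ norm by a multilinear Carleson embedding whose constant is computed directly from $[\vec w]_{A_{\vec P}}$. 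The symmetry $p_1=\dots=p_m=r$ is what allows the Carleson constant to scale as $[\vec w]_{A_{\vec P}}^{m/(r-1)}$ rather than the loose power provided by (i).

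\emph{Main obstacle.} The delicate step is the upper bound in (ii). The (i) estimate wastes a factor $[\vec w]_{A_{\vec P}}^{m(m-1)/(r(r-1))}$ because it bounds $[\sigma_i]_{A_\infty}$ for each $i$ in isolation; moreover the naive pointwise bound $\Mm(\vec f\,)\le\prod_iM(f_i)$ is useless because $\vec w\in A_{\vec P}$ does not force $w_i\in A_r$. The crux is to organize the multilinear Carleson embedding so that the symmetry $p_i=r$ produces exactly the exponent $m/(r-1)$, neither more nor less, by exploiting the joint (not coordinatewise) behaviour of the $\sigma_i$'s.
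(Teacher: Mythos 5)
Your treatment of part (i) is essentially the paper's own argument. For the lower bound you use the same power-weight family (the paper takes $n=1$, $w_i=|x|^{(1-\ve)(p_i-1)}$, $f_i=x^{-1+\ve}\chi_{(0,1)}$), and the exponent count $(mp+1)/p\le \a(mp-1)+1/p$ is identical. For the upper bound you invoke precisely Lemma \ref{technical_lemma}, i.e.\ the self-improvement $[\sigma_i]_{A_{mp_i'}}\le C[\vec w]_{A_{\vec P}}^{p_i'/p}$ combined with $[\sigma_i]_{A_\infty}\lesssim[\sigma_i]_{A_{mp_i'}}$, plugged into Theorem \ref{Main_thm}; the arithmetic $\sum_i p_i'/(pp_i)=\frac1p\sum_i\frac1{p_i-1}$ matches. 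Your observation that (i) overshoots (ii) by the factor $(m+r-1)/r$ is also correct.

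For part (ii), however, your proposal has a genuine gap. You correctly note that the lower bound is immediate from (i), and you correctly identify the upper bound as the real content, but the argument you sketch --- extract a sparse family and ``control the resulting $L^p(\nu_{\vec w})$ norm by a multilinear Carleson embedding whose constant is computed directly from $[\vec w]_{A_{\vec P}}$'' --- is not carried out; indeed you concede in the ``Main obstacle'' paragraph that the crux is exactly how to make the Carleson constant come out as $[\vec w]_{A_{\vec P}}^{m/(r-1)}$. That is the whole theorem. Moreover, there is a structural snag you do not address: when $p_1=\dots=p_m=r$ one has $p=r/m$, which is $\le 1$ as soon as $m\ge r$, so $L^p(\nu_{\vec w})$ is only a quasi-Banach space and standard Carleson-embedding duality does not apply directly. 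The paper sidesteps all of this by adapting the elementary proof of Buckley's theorem from \cite{Ler3}: from the identity
\[
\prod_{i=1}^m\frac{1}{|Q|}\int_Q|f_i|
=A_{\vec P}(\vec w;Q)^{\frac{m}{r-1}}
\left(\frac{|Q|}{\nu_{\vec w}(Q)}
\Big(\prod_{i=1}^m\frac{1}{\si_i(Q)}\int_Q|f_i|\Big)^{\frac{r-1}{m}}\right)^{\frac{m}{r-1}}
\]
one obtains the pointwise bound
\[
{\mathcal M}(\vec f\,)(x)\le [\vec w]_{A_{\vec P}}^{\frac{m}{r-1}}
M_{\nu_{\vec w}}\big({\mathcal M}_{\vec{\si}}(\vec f\,)^{\frac{r-1}{m}}\nu_{\vec w}^{-1}\big)(x)^{\frac{m}{r-1}},
\]
and the proof is finished by the boundedness of the weighted dyadic maximal operator on $L^{r'}(\nu_{\vec w})$ with constant independent of the weight, followed by H\"older with exponents $p_i/p$ to reduce $\|\mathcal M_{\vec\sigma}(\vec f)\|_{L^{r/m}(\nu_{\vec w}^{1-r'})}$ to $\prod_i\|M_{\sigma_i}(f_i\sigma_i^{-1})\|_{L^r(\sigma_i)}$. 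This is a pointwise factorization argument, not a sparse/Carleson one, and it produces exactly the exponent $m/(r-1)$ with no room for slack. You should either carry out the Carleson computation you gesture at (and deal with $p<1$) or adopt the factorization route; as written, the upper bound in (ii) is not proved.
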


It is easy to see that the upper and lower bounds for $\a$ in (i) coincide if $m=1$.
The upper bound for $\a$ in (i) is a corollary of Theorem \ref{Main_thm} after an application of Lemma \ref{technical_lemma} (see section 3). However, in the case of (ii) $\a$ coincides with the lower bound in (i).
This says that if $m\ge 2$, then the upper bound in (i) is not sharp, in general. Hence, contrary to the linear situation, (\ref{OneWeightBestConstant}) cannot be used in order to get a sharp bound in terms of $[\vec w]_{A_{\vec P}}$. Perhaps, the explanation of this is that the right-hand side of (\ref{OneWeightBestConstant})
involves $m+1$ independent suprema while the definition of $[\vec w]_{A_{\vec P}}$ involves only one supremum or else  Lemma \ref{technical_lemma} is not sharp. Resuming, the problem of finding the sharp $\a$ in (\ref{sharpbuck}) remains open except the case considered in (ii).

We also give a sufficient condition for the ``two-weighted" boundedness of ${\mathcal M}$ with precise bounds generalizing the corresponding linear result from \cite{P1} and its multilinear counterpart in \cite{Mo1}. Let $X$ be a Banach function space.
By $X'$ we denote the associate space to $X$. Given a cube $Q$, define the $X$-average of $f$ over $Q$ and the maximal operator $M_X$ by
$$\|f\|_{X,Q}=\|\tau_{\ell_Q}(f\chi_Q)\|_X,\quad M_Xf(x)=\sup_{Q\ni x}\|f\|_{X,Q},$$
where $\ell_Q$ denotes the side length of $Q$ and where \,$\tau_{\delta}f=f(\delta x)$,\, $\delta>0, x\in \Rn$.

\begin{theorem}\label{TwoWeights_Thm}
Let $1<p_i<\infty,i=1,\dots,m$ and $\frac{1}{p}=\frac{1}{p_1}+\ldots+\frac{1}{p_m}$.
Let $X_i$ be a Banach function space such that $M_{X_i'}$ is bounded on $L^{p_i}(\Rn)$.
Let $u$ and $v_1,\dots,v_m$ be the weights satisfying
$$
K=\sup_Q \Big(\frac{u(Q)}{|Q|}\Big)^{\frac{1}{p}}\prod_{i=1}^m\|v_i^{-1/p_i}\|_{X_i,Q}<\infty.
$$
Then
$$
\|{\mathcal M}(\vec f\,)\|_{L^p(u)}\le C_{n,m}K\prod_{i=1}^{m}\|M_{X_i'}\|_{L^{p_i}(\Rn)}\|f_i\|_{L^{p_i}(v_i)}.
$$
\end{theorem}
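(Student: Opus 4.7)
The plan is to follow the principal-cube (Calder\'on--Zygmund stopping-time) argument of P\'erez, adapted to the multilinear setting, and to couple it with H\"older's inequality in Banach function spaces. First I would reduce to the dyadic analog: by the standard shifted dyadic grids trick, $\mathcal M(\vec f)\le c_n\sum_{t}\mathcal M^{\mathcal D^{(t)}}(\vec f)$ for a finite family of grids, so it suffices to prove the stated bound for $\mathcal M^{\mathcal D}$ associated to a single dyadic grid $\mathcal D$. Fix a constant $a=a_{n,m}>1$ to be chosen, and for each $k\in\Z$ let $\Omega_k=\{x:\mathcal M^{\mathcal D}(\vec f)(x)>a^k\}=\bigsqcup_j Q_{k,j}$, where $\{Q_{k,j}\}$ is the collection of maximal dyadic cubes satisfying $\prod_{i=1}^m\langle|f_i|\rangle_{Q_{k,j}}>a^k$. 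Set $E_{k,j}=Q_{k,j}\setminus\Omega_{k+1}$. A multilinear Calder\'on--Zygmund decomposition (which rests on the weak-type $L^1\times\cdots\times L^1\to L^{1/m,\infty}$ bound for $\mathcal M^{\mathcal D}$) shows that, for $a$ large enough in terms of $n$ and $m$, one has $|E_{k,j}|\ge\tfrac12|Q_{k,j}|$, and the sets $\{E_{k,j}\}$ are pairwise disjoint.

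Since $\mathcal M^{\mathcal D}(\vec f)\le a^{k+1}$ on $\Rn\setminus\Omega_{k+1}$ and $\Rn=\bigsqcup_k(\Omega_k\setminus\Omega_{k+1})$ up to a null set, together with the definition of the $Q_{k,j}$,
$$\|\mathcal M^{\mathcal D}(\vec f)\|_{L^p(u)}^p\le a^p\sum_{k,j}a^{kp}u(E_{k,j})\le a^p\sum_{k,j}\Big(\prod_{i=1}^m\langle|f_i|\rangle_{Q_{k,j}}\Big)^p u(E_{k,j}).$$
H\"older's inequality for Banach function spaces gives $\langle|f_i|\rangle_Q\le \|f_iv_i^{1/p_i}\|_{X_i',Q}\,\|v_i^{-1/p_i}\|_{X_i,Q}$, and the hypothesis $K<\infty$ yields $\prod_{i=1}^m\|v_i^{-1/p_i}\|_{X_i,Q}^p\le K^p\,|Q|/u(Q)$ for every cube $Q$. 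Combining these, together with $u(E_{k,j})\le u(Q_{k,j})$ and $|Q_{k,j}|\le 2|E_{k,j}|$ from Step~1, I obtain
$$a^{kp}u(E_{k,j})\le 2K^p\prod_{i=1}^m\|f_iv_i^{1/p_i}\|_{X_i',Q_{k,j}}^p\,|E_{k,j}|.$$

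Finally, since $\|f_iv_i^{1/p_i}\|_{X_i',Q_{k,j}}\le M_{X_i'}(f_iv_i^{1/p_i})(x)$ for every $x\in E_{k,j}\subset Q_{k,j}$, disjointness of the $E_{k,j}$'s gives
$$\sum_{k,j}\prod_{i=1}^m\|f_iv_i^{1/p_i}\|_{X_i',Q_{k,j}}^p|E_{k,j}|\le\int_{\Rn}\prod_{i=1}^m M_{X_i'}(f_iv_i^{1/p_i})(x)^p\,dx.$$
H\"older's inequality with the exponents $p_i/p$ (which satisfy $\sum_{i=1}^m p/p_i=1$) together with the hypothesized $L^{p_i}$-boundedness of $M_{X_i'}$ bounds the right-hand side by $\prod_{i=1}^m\|M_{X_i'}\|_{L^{p_i}(\Rn)}^p\|f_i\|_{L^{p_i}(v_i)}^p$; taking $p$-th roots and summing over the finite collection of dyadic grids yields the desired estimate. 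The only genuinely multilinear obstacle is the geometric fact $|E_{k,j}|\gtrsim|Q_{k,j}|$ used in Step~1: unlike the case $m=1$, it does not follow from a one-line parent-cube comparison, and instead requires the weak-type bound for the multilinear maximal operator to guarantee that the stopping cubes $Q_{k+1,\ell}\subset Q_{k,j}$ occupy a controlled fraction of $Q_{k,j}$. Once this is in place, the remainder of the argument is essentially bookkeeping via H\"older's inequality.
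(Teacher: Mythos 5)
Your proof is correct and follows essentially the same route as the paper's: dyadic reduction, stopping-time decomposition $\Omega_k=\bigsqcup_j Q_{k,j}$, the $A_p$-type replacement of $u(Q_{k,j})$ by $K^p|Q_{k,j}|$ together with H\"older in the Banach function spaces $X_i$, passage to the disjoint sets $E_{k,j}$ via $|Q_{k,j}|\le 2|E_{k,j}|$, pointwise domination of $\|f_iv_i^{1/p_i}\|_{X_i',Q_{k,j}}$ by $M_{X_i'}(f_iv_i^{1/p_i})$, and a final H\"older with exponents $p_i/p$ plus the assumed $L^{p_i}$-boundedness of $M_{X_i'}$. The sparseness bound $|E_{k,j}|\ge\tfrac12|Q_{k,j}|$ that you flag as the genuinely multilinear point is established in the paper (Lemma \ref{appr}) by applying H\"older's inequality to the children stopping cubes inside $Q_{k,j}$ with the explicit choice $a=2^{m(n+1)}$, which is exactly the localized weak-type argument you allude to.
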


This result can be seen as a  two weight version  of \eqref{OneWeightBestConstant}
when considering function spaces $X$ given by $X=L^{rp'}$ for $1<p,r<\infty$ so that
$$\|M_{X'}\|_{L^{p}(\R^n)}  = \|M_{ (rp')'  }\|_{L^{p}(\R^n)} \approx (r')^{1/p}. $$
Another interesting example is given when considering the Orlicz space space
\,$X=L_B$\, where $B$ is a Young function for which $\|M_{X'}\|_{L^{p}(\R^n)}  = \|M_{\bar{B}}\|_{L^{p}(\R^n)}$ is finite.  In particular if \,$B(t)=t^{p'} \,(\log( e +t))^{p'-1+\delta}$\,, \,$\delta>0$,\,$1<p<\infty$\, it follows from \cite{P1}  that
$$    \|M_{X'}\|_{L^{p}(\R^n)}  = \|M_{\bar{B}}\|_{L^{p}(\R^n)} \approx (\frac{1}{\delta})^{1/p}.$$
%


We turn now to some multilinear analogues of the sharp weighted results for singular integrals. First, we recall some linear results. Let $T$ be a Calder\'on-Zygmund operator. Then it was proved by T. Hyt\"onen \cite{Hyt} in full generality that
\begin{equation}\label{hyta2}
\|T\|_{L^p(w)}\le C_{T,n,p}[w]_{A_p}^{\max(1,\frac{1}{p-1})}\quad (1<p<\infty).
\end{equation}
Observe that it suffices to prove (\ref{hyta2}) in the case $p=2$; then for any other $p$ the result follows by the sharp extrapolation theorem. Note also that (\ref{hyta2}) in the case
$p=2$ was usually referred as the $A_2$ conjecture. In two recent papers \cite{Ler1,Ler2} a different proof of (\ref{hyta2}) was found by the second author.
This proof shows that there is an intimate relationship between the (continuous) singular integral $T$ and some very special  dyadic type operators. These operators are defined by means of the concept  of  sparseness.  Given a sparse family
${\mathcal S}=\{Q_j^k\}$ of cubes from a dyadic grid ${\mathscr D}$, (these notions will be defined in Section \ref{dyadicstuff}) we consider the operator ${\mathcal A}_{{\mathcal S}, {\mathscr D}}$ defined by
$$
{\mathcal A}_{{\mathcal S},{\mathscr D}}f(x)=\sum_{j,k}f_{Q_j^k}\chi_{Q_j^k}(x).
$$
It was proved in \cite{Ler1,Ler2} that for any Banach function space $X$,
\begin{equation}\label{tbya}
\|Tf\|_{X}\le c_{T,n}\sup_{{\mathcal S}, {\mathscr D}}\|{\mathcal A}_{{\mathcal S},{\mathscr D}}|f|\|_{X}.
\end{equation}
A rather simple argument found in \cite{CMP} shows that $\|{\mathcal A}_{{\mathcal S},{\mathscr D}}\|_{L^2(w)}\le c[w]_{A_2}$. Therefore if $X=L^2(w)$, then
(\ref{tbya}) implies (\ref{hyta2}) for $p=2$.


The second main result of this paper is an extension of (\ref{tbya}) to the multilinear setting. Here, the dyadic operator that appears naturally is a multilinear version of the operator
${\mathcal A}_{{\mathscr{D}},{\mathcal S}}$ given by
$${\mathcal A}_{{\mathscr{D}},{\mathcal S}}(\vec f\,)(x)=\sum_{j,k}\Big(\prod_{i=1}^m(f_i)_{Q_j^k}\Big)\chi_{Q_j^k}(x).$$
Given $\vec f=(f_1,\dots,f_m)$, denote $\vec{|f|}=(|f_1|,\dots,|f_m|)$.

\begin{theorem}\label{multcz}
Let $T(\vec f\,)$ be a multilinear Calder\'on-Zygmund operator and
let $X$ be a Banach function space over ${\mathbb R}^n$ equipped with Lebesgue measure.
Then, for any appropriate $\vec f$,
$$
\|T(\vec f\,)\|_{X}\le c_{T,m,n}\sup_{{\mathscr{D}},{\mathcal S}}\|{\mathcal A}_{{\mathscr{D}},{\mathcal S}}(\vec{|f|})\|_{X},
$$
where the supremum is taken over arbitrary dyadic grids ${\mathscr{D}}$ and sparse families ${\mathcal S}\in {\mathscr{D}}$.
\end{theorem}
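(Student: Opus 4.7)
The plan is to imitate the linear argument of Lerner \cite{Ler1,Ler2} in the multilinear setting. The three ingredients are (i) the local mean oscillation decomposition, (ii) a pointwise estimate for the local oscillation of $T(\vec f)$ on a cube by a tail of products of averages of $|f_i|$, and (iii) the three-lattice trick to convert these tails into a finite sum of dyadic sparse operators.

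First, I would invoke the local mean oscillation formula: for any measurable $g$ and any cube $Q_0$ there exists a sparse family $\Ss(Q_0)\subset \Dd(Q_0)$ such that
$$
|g(x)-m_g(Q_0)|\le c_n\sum_{Q\in\Ss(Q_0)}\o_{\la_n}(g;Q)\,\chi_Q(x)\qquad\text{a.e. }x\in Q_0,
$$
where $\o_{\la}(g;Q)$ denotes the local oscillation and $m_g(Q_0)$ a median. Applied to $g=T(\vec f)$ and taking $Q_0$ large enough that $m_{T(\vec f)}(Q_0)\to 0$ (which is legitimate since $T(\vec f)\in L^{1/m,\infty}$ by \cite{LOPTT}), this reduces the pointwise control of $|T(\vec f)|$ to a good estimate of $\o_{\la_n}(T(\vec f);Q)$.

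The heart of the argument is the estimate
$$
\o_{\la_n}(T(\vec f);Q)\le c_{T,m,n}\sum_{k=0}^{\infty}2^{-k\d}\prod_{i=1}^m\frac{1}{|3^{k+1}Q|}\int_{3^{k+1}Q}|f_i|.
$$
To prove it, split each $f_i=f_i\chi_{3Q}+f_i\chi_{(3Q)^c}$ and expand $T(\vec f)$ as a sum over the $2^m$ tensor pieces $T(f_1^{A_1},\dots,f_m^{A_m})$ with $A_i\in\{0,\infty\}$. The purely local term $T(f_1\chi_{3Q},\dots,f_m\chi_{3Q})$ is handled through the endpoint multilinear weak-type bound $L^1\times\cdots\times L^1\to L^{1/m,\infty}$, whose Kolmogorov-type form majorizes the median on $Q$ by $c\prod_i\frac{1}{|3Q|}\int_{3Q}|f_i|$. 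For each of the remaining mixed pieces (at least one $A_i=\infty$) I would use the H\"older regularity of the kernel together with the telescoping decomposition of $(3Q)^c$ into annuli $3^{k+1}Q\setminus 3^kQ$, which produces the factor $2^{-k\d}$ and a product of averages on $3^{k+1}Q$; the factors corresponding to indices with $A_i=0$ are simply majorized by the same average on $3^{k+1}Q$. This yields the desired bound.

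Combining (i) and (ii) gives, for a.e.\ $x\in Q_0$,
$$
|T(\vec f)(x)|\le c\sum_{Q\in\Ss(Q_0)}\sum_{k\ge 0}2^{-k\d}\prod_{i=1}^m\frac{1}{|3^{k+1}Q|}\int_{3^{k+1}Q}|f_i|\,\chi_Q(x).
$$
To absorb the dilated cubes $3^{k+1}Q$ back into a dyadic structure I would use the three-lattice (or $3^n$-grid) trick: there exist finitely many (at most $3^n$) dyadic grids $\Dd^{(\b)}$ such that every $3^{k+1}Q$ is contained in some $R_{k,Q}\in\Dd^{(\b)}$ with $|R_{k,Q}|\le c_n|3^{k+1}Q|$. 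Reindexing the double sum by these enlarged cubes and checking that the resulting families remain sparse in each grid, one obtains a bound of the form $|T(\vec f)(x)|\le c\sum_{\b=1}^{3^n}\A_{\Dd^{(\b)},\Ss^{(\b)}}(\vec{|f|})(x)$. Letting $Q_0\uparrow \Rn$, taking the $X$-norm, and absorbing the finite sum into the supremum produces the claim. The main obstacle is the second step: the cross-terms in the kernel decomposition must be handled so that the final bound factorizes as a product of averages of $|f_i|$ on a \emph{single} dilate $3^{k+1}Q$, which requires using the kernel H\"older condition in one variable at a time while uniformly dominating the localized factors.
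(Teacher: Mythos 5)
Your outline of steps (i) and (ii) matches the paper's: the local mean oscillation decomposition (Theorem~\ref{decom1}) together with the oscillation bound of Proposition~\ref{oscsin} give exactly the pointwise estimate \eqref{point1}. The gap is in step (iii). You claim that after replacing each dilate $3^{k+1}Q$ by a dyadic cube $R_{k,Q}\in\Dd^{(\b)}$ of comparable size, ``the resulting families remain sparse in each grid.'' They do not. For $l$ large, the dilates $\{2^lQ_j^k\}_{j,k}$ of a sparse family overlap massively: the disjointness in $j$ for fixed $k$ fails, and a single point can lie in roughly $l$ of the dilated cubes coming from a chain $Q_{j_0}^{k_0}\supset Q_{j_1}^{k_0+1}\supset\cdots$. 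So no reindexing can turn the operator $\mathcal{T}_{\mathcal{S},l}$ defined after \eqref{point1} directly into a single sparse operator $\A_{\Dd,\Ss}$, and the bound $\|\mathcal{T}_{\mathcal{S},l}\|_X\le C\sup\|\A_{\Dd,\Ss}\|_X$ uniformly in $l$ is in fact false.

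What the paper's proof does instead, and what your plan is missing, is precisely the mechanism that absorbs this overlap: estimate \eqref{eqTl} carries an extra factor of $l$ on the right-hand side, $\|\mathcal{T}_{\mathcal{S},l}(\vec f)\|_X\le c(m,n)\,l\,\sup\|\A_{\Dd,\Ss}\vec{|f|}\|_X$, which the geometric factor $2^{-l\e}$ then kills. To produce this $l$, the paper dualizes against $g\in X'$ via \eqref{dual}, introduces the auxiliary operator $\mathscr{M}_{l,\a}$, and estimates \emph{its} local mean oscillation (Lemma~\ref{oscM}) using the weak-$(1,1)$ bound $\|\mathscr{T}_lg\|_{L^{1,\infty}}\le c(n)\,l\,\|g\|_{L^1}$ from \cite[Lemma~3.2]{Ler2}. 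A second application of Theorem~\ref{decom1} and H\"older in $X,X'$ finishes. This duality detour is also exactly the reason the hypothesis that $X$ be a Banach function space is needed (so \eqref{dua} applies) — a point your purely pointwise strategy obscures, since if the naive ``stays sparse'' claim were true the theorem would extend for free to quasi-Banach $X$, contradicting the open problem discussed in the introduction.
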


Similarly to the linear case, we would like to apply this result when $X=L^{p}(\nu_{\vec w})$. However, the exponent $p$ is allowed to be smaller than one, to be more precise  \,$1/m<p<\infty$ (see section \ref{MCZO}). Therefore, if $1/m<p<1$, Theorem \ref{multcz} cannot be applied since in this case the space $L^{p}(\nu_{\vec w})$ is not a Banach space.
This raises an interesting question whether the condition in Theorem \ref{multcz} that $X$ is a Banach space can be relaxed until that $X$ is a quasi-Banach space.
It is natural to consider first this question in the linear situation. Observe that in the current proof of (\ref{tbya}) the fact that $X$ is a Banach space was essential.

The third main result of this paper can be seen as a multilinear version of the $A_2$ conjecture (\ref{hyta2}). Indeed, one of the main results obtained in  \cite{LOPTT} is that if $\vec w\in A_{\vec P}$, then an analogue of (\ref{strong}) holds with $T(\vec f\,)$ instead of ${\mathcal M}(\vec f\,)$. Hence, the question about the sharp
dependence on $[\vec w]_{A_{\vec P}}$ in the corresponding inequality is quite natural.
It is interesting that contrary to the linear situation where (\ref{tbya}) implies the $A_2$ conjecture, we are currently able to apply Theorem \ref{multcz} only in one particular case
being the content of the following theorem.

\begin{theorem}\label{mult} Let $T(\vec f\,)$ be a multilinear Calder\'on-Zygmund operator. Assume that $p_1=p_2=\dots=p_m=m+1$. Then
\begin{equation}\label{mucz}
\|T(\vec f\,)\|_{L^p(\nu_{\vec w})}\le C_{T,m,n}[\vec w]_{A_{\vec P}}\prod_{i=1}^m\|f_i\|_{L^{p_i}(w_i)}.
\end{equation}
\end{theorem}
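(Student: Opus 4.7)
The plan is to combine Theorem~\ref{multcz} with a direct dyadic estimate on the associated sparse operator. Since $p_1=\dots=p_m=m+1$ forces $p=(m+1)/m>1$, the space $L^p(\nu_{\vec w})$ is Banach, so Theorem~\ref{multcz} reduces \eqref{mucz} to proving, uniformly in every dyadic grid $\mathscr{D}$ and every sparse family $\mathcal{S}\subset\mathscr{D}$, the linear estimate
\begin{equation*}
\|{\mathcal A}_{\mathscr{D},\mathcal{S}}(\vec f\,)\|_{L^p(\nu_{\vec w})}\le C_m\,[\vec w]_{A_{\vec P}}\prod_{i=1}^m\|f_i\|_{L^{p_i}(w_i)}.
\end{equation*}

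Two numerical coincidences specific to the choice $p_i=m+1$ drive the argument. First, $p/p_i'=1$ for every $i$, which collapses the $A_{\vec P}$ characteristic to
$$[\vec w]_{A_{\vec P}}=\sup_Q\frac{\nu_{\vec w}(Q)}{|Q|}\prod_{i=1}^m\frac{\sigma_i(Q)}{|Q|}.$$
Second, a direct check of exponents (they sum to zero on each $w_i$) yields the pointwise identity
$$\prod_{i=1}^m\sigma_i(x)^{1/p_i}\cdot \nu_{\vec w}(x)^{1/p'}=1.$$

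To estimate ${\mathcal A}_{\mathscr{D},\mathcal{S}}$ I would dualize against a nonnegative $g$ with $\|g\|_{L^{p'}(\nu_{\vec w})}\le 1$, substitute $f_i=h_i\sigma_i$ so that $\|f_i\|_{L^{p_i}(w_i)}=\|h_i\|_{L^{p_i}(\sigma_i)}$, and rewrite each cube-average as a weighted average times the weight of the cube. The resulting cube factor $\frac{\nu_{\vec w}(Q)}{|Q|}\prod_i\frac{\sigma_i(Q)}{|Q|}$ is then controlled by $[\vec w]_{A_{\vec P}}$. Using the sparseness to replace $|Q|$ by pairwise disjoint $|E_Q|$ and bounding the weighted averages pointwise on $E_Q$ by the dyadic weighted maximal operators $M^d_{\sigma_i}$ and $M^d_{\nu_{\vec w}}$ reduces the pairing to
$$C\,[\vec w]_{A_{\vec P}}\int_{\Rn}\prod_{i=1}^m M^d_{\sigma_i}h_i(x)\cdot M^d_{\nu_{\vec w}}g(x)\,dx.$$
Inserting the identity $1=\prod_i\sigma_i^{1/p_i}\nu_{\vec w}^{1/p'}$ into this integrand and applying H\"older with exponents $(p_1,\dots,p_m,p')$, which sum to $1$, followed by the universal $L^q(\mu)$-boundedness of $M^d_\mu$ with constant depending only on $q$, closes the estimate with the required linear dependence on $[\vec w]_{A_{\vec P}}$.

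The main obstacle here is conceptual rather than technical: one must recognize that the diagonal choice $p_i=m+1$ is precisely what simultaneously forces $p/p_i'=1$ and makes the H\"older decomposition $1=\prod_i\sigma_i^{1/p_i}\cdot\nu_{\vec w}^{1/p'}$ available, so that all of the weighted maximal operators land in their natural $L^{p_i}(\sigma_i)$ and $L^{p'}(\nu_{\vec w})$ spaces. For any other configuration of the $p_i$'s at least one of the two alignments breaks and this direct sparse-plus-H\"older route no longer produces linear dependence on $[\vec w]_{A_{\vec P}}$, consistent with the authors' comment that Theorem~\ref{multcz} has so far been successfully applied only in this one case.
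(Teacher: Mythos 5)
Your proposal is correct and follows essentially the same route as the paper: reduce via Theorem~\ref{multcz} to the sparse operator, dualize $L^p(\nu_{\vec w})$, use $p/p_i'=1$ to extract one copy of $[\vec w]_{A_{\vec P}}$ from each cube, replace $|Q_j^k|$ by $|E_j^k|$ via sparseness, dominate by dyadic weighted maximal operators, and close by H\"older plus the universal weighted-$L^q$ bound for $M^d_\mu$. The only cosmetic differences are that the paper dualizes with the unweighted pairing against $g\in L^{p'}(\nu_{\vec w}^{-1/(p-1)})$ and splits the H\"older step into two stages (first $(p,p')$ with the weight factorization $\nu_{\vec w}^{-(p-1)}=\prod_i\sigma_i^{p/p_i}$, then $(p_i/p)$), whereas you dualize with the $\nu_{\vec w}\,dx$ pairing and invoke the single identity $\prod_i\sigma_i^{1/p_i}\nu_{\vec w}^{1/p'}=1$ with a one-shot H\"older in the exponents $(p_1,\dots,p_m,p')$; these are equivalent formulations of the same computation.
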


Several remarks about this result are in order. As we already mentioned, Theorem \ref{mult} can be regarded as a multilinear ``$A_2$ conjecture". However, it is natural to ask
how to extend it to all $1<p_i<\infty$. This leads to several interesting problems. As we explained above, (\ref{hyta2}) can be obtained from the case $p=2$ by the sharp version of the extrapolation theorem of Rubio de Francia obtained in \cite{DGPP} (see also \cite{D} or \cite{CMP2} for different and simpler proofs). It would be very desirable to get a multilinear analogue of this result. Having such an analogue, inequality (\ref{mucz}) probably would be a starting point to extrapolate from.

Observe, however, that (\ref{hyta2}) can be proved also without the use of extrapolation. Indeed, one can easily prove (\ref{hyta2}) with ${\mathcal A}_{{\mathscr{D}},{\mathcal S}}$ instead of $T$ for all $1<p<\infty$ (as it was done in \cite{CMP} for $p=2$), and then apply (\ref{tbya}) with $X=L^p(w)$.
The proof of (\ref{hyta2}) for ${\mathcal A}_{{\mathscr{D}},{\mathcal S}}$ is very close in spirit to the proof of Buckley's inequality (\ref{buckley}) found in
\cite{Ler3}. Hence, it is natural to ask whether it is possible to find a similar proof for a multilinear version of
${\mathcal A}_{{\mathscr{D}},{\mathcal S}}$. But this leads to a problem of finding $\a$ in Theorem \ref{multbuckley} by the method in \cite{Ler3}.
Part (ii) of Theorem \ref{multbuckley} indeed obtained by an adaptation of this method. However how to do that in the case of different $p_i$ is not clear.

The paper is organized as follows. Some preliminaries are contained in Section 2. In Section 3 we prove all theorems related to ${\mathcal M}$.
Section 4 is devoted to the proof of Theorem \ref{multcz}. Finally, Theorem \ref{mult} is proved in Section 5.

\section{Preliminaries}
\subsection{Dyadic grids}\label{dyadicstuff}
Recall that the standard dyadic grid in ${\mathbb R}^n$ consists of the cubes
$$2^{-k}([0,1)^n+j),\quad k\in{\mathbb Z}, j\in{\mathbb Z}^n.$$
Denote the standard grid by ${\mathcal D}$.

By a {\it general dyadic grid} ${\mathscr{D}}$ we mean a collection of
cubes with the following properties: (i)
for any $Q\in {\mathscr{D}}$ its sidelength $\ell_Q$ is of the form
$2^k, k\in {\mathbb Z}$; (ii) $Q\cap R\in\{Q,R,\emptyset\}$ for any $Q,R\in {\mathscr{D}}$;
(iii) the cubes of a fixed sidelength $2^k$ form a partition of ${\mathbb
R}^n$.

We say that $\{Q_j^k\}$ is a {\it sparse family} of cubes if:
(i)~the cubes $Q_j^k$ are disjoint in $j$, with $k$ fixed;
(ii) if $\Omega_k=\cup_jQ_j^k$, then $\Omega_{k+1}\subset~\Omega_k$;
(iii) $|\Omega_{k+1}\cap Q_j^k|\le \frac{1}{2}|Q_j^k|$.

With each sparse family $\{Q_j^k\}$ we associate the sets $E_j^k=Q_j^k\setminus \O_{k+1}$.
Observe that the sets $E_j^k$ are pairwise  disjoint and $|Q_j^k|\le 2|E_j^k|$.

Given a cube $Q_0$, denote by ${\mathcal D}(Q_0)$ the set of all
dyadic cubes with respect to $Q_0$, that is, the cubes from ${\mathcal D}(Q_0)$ are formed
by repeated subdivision of $Q_0$ and each of its descendants into $2^n$ congruent subcubes.
Observe that if $Q_0\in {\mathscr{D}}$, then each cube from ${\mathcal D}(Q_0)$ will also
belong to ${\mathscr{D}}$.

We will use the following proposition from \cite{HP}.

\begin{prop}\label{prhp} There are $2^n$ dyadic grids ${\mathscr{D}}_{\a}$ such that for any cube $Q\subset {\mathbb R}^n$ there exists a cube $Q_{\a}\in {\mathscr{D}}_{\a}$
such that $Q\subset Q_{\a}$ and $\ell_{Q_{\a}}\le 6\ell_Q$.
\end{prop}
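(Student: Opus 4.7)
The plan is to construct the $2^n$ grids explicitly as translates of the standard grid $\mathcal{D}$ by a family of scale-dependent shifts parametrized by $\alpha \in \{0,1\}^n$, and then to verify the covering property by a direct geometric argument that amounts to the classical ``one-third trick'' in each coordinate.

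First I would define, for each $\alpha = (\alpha_1,\dots,\alpha_n) \in \{0,1\}^n$, the collection
$$\mathscr{D}_\alpha = \bigl\{\, 2^{-k}\bigl([0,1)^n + j\bigr) + t_k(\alpha) \;:\; k \in \mathbb{Z},\, j \in \mathbb{Z}^n\,\bigr\},$$
where $t_k(\alpha) \in \mathbb{R}^n$ is a translation vector depending on the scale $k$ and on $\alpha$, chosen so that translations at consecutive scales are compatible (so that the nesting property (ii) of a dyadic grid is preserved). Concretely, one can take the $i$-th coordinate of $t_k(\alpha)$ to be $\tfrac{1}{3}\alpha_i$ when $k$ is even and $-\tfrac{1}{3}\alpha_i$ (or, more properly, a shift that respects the doubling between scales $k$ and $k-1$) when $k$ is odd; the precise formula is chosen so that each $\mathscr{D}_\alpha$ inherits properties (i)--(iii) of a general dyadic grid. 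The verification of these three properties is direct once the translations are set up correctly: property (i) is automatic, property (iii) reduces to the fact that any translate of the standard partition at scale $2^{-k}$ is still a partition of $\mathbb{R}^n$, and property (ii) is a consequence of the compatibility of $t_k$ and $t_{k+1}$.

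Next I would prove the covering property. Given a cube $Q \subset \mathbb{R}^n$, pick the unique integer $k$ such that $2^{k-1} < 6\ell_Q \le 2^k$ (equivalently, $3\ell_Q \le 2^{k-1} < 6\ell_Q$), so every cube of $\mathscr{D}_\alpha$ at sidelength $2^{k-1}$ has side at most $6\ell_Q$. It now suffices to show that for some $\alpha$, no hyperplane of the form ``$i$-th coordinate equals an integer multiple of $2^{k-1}$ shifted by $t_{k-1}(\alpha)_i$'' cuts $Q$. This reduces to a one-dimensional statement per coordinate: given an interval $I$ of length $\ell_Q \le 2^{k-1}/3$, among the two translates $\{0, \tfrac{1}{3}2^{k-1}\}$ of the grid $2^{k-1}\mathbb{Z}$ at least one has the property that the closest grid point to $I$ lies at distance $\ge \ell_Q$ from both endpoints of $I$; this is because the two candidate grid positions are separated by $\tfrac{1}{3}2^{k-1} \ge \ell_Q$, so $I$ cannot straddle both. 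Choosing $\alpha_i$ coordinate by coordinate in this way produces a single $\alpha \in \{0,1\}^n$ for which $Q$ lies entirely inside one cube $Q_\alpha$ of $\mathscr{D}_\alpha$ at scale $2^{k-1}$, and by construction $\ell_{Q_\alpha} = 2^{k-1} < 6\ell_Q$.

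The main obstacle, in my view, is purely bookkeeping: one must pick the scale-dependent shift $t_k(\alpha)$ so that the grids $\mathscr{D}_\alpha$ really are nested across scales (so that property (ii) holds) while simultaneously keeping the within-scale shifts in each coordinate independent enough to execute the one-third trick. Once the correct formula for $t_k(\alpha)$ is written down, both checks become routine. The constant $6$ is the natural outcome of combining the factor $2$ loss from having only $\{0,\tfrac{1}{3}\}$ (rather than $\{0,\tfrac{1}{3},\tfrac{2}{3}\}$) translates with the factor $3$ coming from the one-third trick itself.
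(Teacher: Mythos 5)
The paper does not prove this proposition itself; it cites \cite{HP}, where the construction is exactly the ``one-third trick'' with alternating shifted grids that you describe. Your choice of scale $k$ (giving $3\ell_Q\le 2^{k-1}<6\ell_Q$) and the covering argument via the $\tfrac13 2^{k-1}$-separation of the two candidate boundary grids are essentially the same as in \cite{HP} and are sound. There is, however, a concrete error in the construction you write down: the scale-independent shift $t_k(\alpha)_i=(-1)^k\alpha_i/3$ does not produce a dyadic grid. Matching the left endpoint of a cell of scale $2^{-k}$ with that of its putative left child at scale $2^{-k-1}$ forces
$$j'=2j+2^{k+2}(-1)^k\frac{\alpha_i}{3},$$
which is not an integer when $\alpha_i=1$, so the nesting property (ii) fails. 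The fix, which you in fact use implicitly in the covering step when you speak of ``the two translates $\{0,\tfrac13 2^{k-1}\}$ of the grid $2^{k-1}\mathbb{Z}$'', is to scale the shift with the cell size: take $t_k(\alpha)=2^{-k}(-1)^k\alpha/3$, i.e.\
$$\mathscr{D}_\alpha=\Bigl\{2^{-k}\bigl([0,1)^n+j+(-1)^k\alpha/3\bigr):k\in\mathbb{Z},\,j\in\mathbb{Z}^n\Bigr\},\qquad \alpha\in\{0,1\}^n.$$
With this formula the nesting check reduces to $j'=2j+(-1)^k\alpha_i$, which is an integer; it is precisely the alternation of sign across scales together with $3\cdot\tfrac13=1$ that makes the grids nest. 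So your ``obstacle is purely bookkeeping'' remark is accurate, but the formula as stated is wrong and the stated candidate does need the $2^{-k}$ factor.

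One further small inaccuracy: the one-dimensional lemma as you phrase it --- that for some $\alpha_i$ ``the closest grid point to $I$ lies at distance $\ge\ell_Q$ from both endpoints of $I$'' --- is stronger than necessary and can fail for both choices of $\alpha_i$ simultaneously (for instance when $2^{k-1}=3\ell_Q$ and $I$ has its left endpoint on a grid line of one translate and its right endpoint on a grid line of the other). What the separation $\tfrac13 2^{k-1}\ge\ell_Q$ actually yields, and all that is needed, is that for at least one $\alpha_i$ no boundary point of the $\alpha_i$-grid lies in the interior of $I$; this does follow from your ``cannot straddle both'' observation and suffices to place $Q$ in a single cell.
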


\begin{lemma}\label{appr}
For any non-negative integrable $f_i,i=1,\dots,m$, there exist sparse families ${\mathcal S}_{\a}\in {\mathscr{D}}_{\a}$ such that for all $x\in {\mathbb R}^n$,
$${\mathcal M}(\vec f\,)(x)\le (2\cdot 12^n)^{m}\sum_{\a=1}^{2^n}
{\mathcal A}_{{\mathscr{D}_{\a}},{\mathcal S}_{\a}}(\vec f\,)(x).$$
\end{lemma}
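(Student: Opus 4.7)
The plan is to reduce from general cubes to dyadic cubes via Proposition \ref{prhp}, and then, on each dyadic grid, to run a Calder\'on--Zygmund type selection that yields the sparse family. Proposition \ref{prhp} gives, for every cube $Q$, some $\alpha$ and a cube $Q_\alpha\in\mathscr{D}_\alpha$ with $Q\subset Q_\alpha$ and $|Q_\alpha|\le 6^n|Q|$. Hence
$$\prod_{i=1}^m\frac{1}{|Q|}\int_Q f_i\le 6^{nm}\prod_{i=1}^m\frac{1}{|Q_\alpha|}\int_{Q_\alpha}f_i,$$
which, after taking the supremum over $Q\ni x$, produces the pointwise bound
$$\mathcal{M}(\vec f\,)(x)\le 6^{nm}\sum_{\alpha=1}^{2^n}\mathcal{M}^{\mathscr{D}_\alpha}(\vec f\,)(x),$$
where $\mathcal{M}^{\mathscr{D}}$ denotes the multi(sub)linear maximal restricted to cubes of $\mathscr{D}$.

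Now fix one grid $\mathscr{D}=\mathscr{D}_\alpha$, set $a=2^{m(n+1)}$, and put
$$\Omega_k=\{x\in\mathbb{R}^n:\mathcal{M}^{\mathscr{D}}(\vec f\,)(x)>a^k\},\qquad k\in\mathbb{Z}.$$
Decompose each $\Omega_k$ into its maximal dyadic cubes $\{Q_j^k\}_j$. Looking at the dyadic parent $\widetilde Q_j^k\not\subset\Omega_k$, one obtains the two-sided estimate
$$a^k<\prod_{i=1}^m(f_i)_{Q_j^k}\le 2^{nm}a^k.$$
I would then take $\mathcal{S}_\alpha=\{Q_j^k\}_{j,k}$.

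The key technical step, and the main obstacle, is the sparsity condition $|\Omega_{k+1}\cap Q_j^k|\le\tfrac12|Q_j^k|$. I would use the endpoint inequality $\mathcal{M}^{\mathscr{D}}:L^1\times\cdots\times L^1\to L^{1/m,\infty}$ with constant $1$, which follows at once by applying the generalized H\"older inequality to the disjoint maximal dyadic cubes that make up a level set. Localized inside $Q_j^k$ at the level $a^{k+1}$, and combined with the upper bound $\prod_i(f_i)_{Q_j^k}\le 2^{nm}a^k$, this gives
$$|\Omega_{k+1}\cap Q_j^k|\le (a^{k+1})^{-1/m}|Q_j^k|\bigl(2^{nm}a^k\bigr)^{1/m}=2^n a^{-1/m}|Q_j^k|=\tfrac12|Q_j^k|$$
by our choice of $a$. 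The constants are tight here, and getting the sharp endpoint constant $1$ is what makes the final numerology work.

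To conclude, for almost every $x$ there is a unique $k$ with $x\in\Omega_k\setminus\Omega_{k+1}$, and then $x\in Q_j^k$ for some $j$, so
$$\mathcal{M}^{\mathscr{D}}(\vec f\,)(x)\le a^{k+1}=a\cdot a^k\le a\prod_{i=1}^m(f_i)_{Q_j^k}\le a\,\mathcal{A}_{\mathscr{D},\mathcal{S}_\alpha}(\vec f\,)(x),$$
since all summands in $\mathcal{A}_{\mathscr{D},\mathcal{S}_\alpha}(\vec f\,)$ are non-negative. Multiplying by $6^{nm}$, summing over the $2^n$ grids, and using $a\cdot 6^{nm}=2^{m(n+1)}\cdot 6^{nm}=(2\cdot 12^n)^m$ delivers the required inequality.
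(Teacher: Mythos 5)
Your proof is correct and follows essentially the same path as the paper's: the reduction to $2^n$ dyadic grids via Proposition \ref{prhp}, followed by the Calder\'on--Zygmund selection $\Omega_k=\{\mathcal{M}^{\mathscr{D}}(\vec f\,)>a^k\}$ with $a=2^{m(n+1)}$, and the sparsity bound obtained by applying H\"older's inequality to the disjoint maximal cubes of $\Omega_{k+1}$ inside $Q_j^k$. The only cosmetic difference is that you package that last H\"older step as the endpoint $L^1\times\cdots\times L^1\to L^{1/m,\infty}$ estimate with constant $1$, while the paper writes out the same sum directly.
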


\begin{proof} First, by Proposition \ref{prhp},
\begin{equation}\label{estd}
{\mathcal M}(\vec f\,)(x)\le 6^{mn}\sum_{\a=1}^{2^n}{\mathcal M}^{{\mathscr{D}}_{\a}}(\vec f\,)(x).
\end{equation}

Consider ${\mathcal M}^d(\vec f\,)$ taken with respect to the standard dyadic grid. We will use
exactly the same argument as in the Calder\'on-Zygmund decomposition. For $c_n$ which will be specified below and for
$k\in {\mathbb Z}$ consider the sets
$$\Omega_k=\{x\in {\mathbb R}^n: {\mathcal M}^d(\vec f\,)(x)>c_n^k\}.$$
Then we have that $\Omega_k=\cup_jQ_j^k$, where the cubes $Q_j^k$ are pairwise disjoint with $k$ fixed, and
$$c_n^k<\prod_{i=1}^m(f_i)_{Q_j^k}\le 2^{mn}c_n^k.$$
From this and from H\"older's inequality,
\begin{eqnarray*}
|Q_j^k\cap \Omega_{k+1}|&=&\sum_{Q_l^{k+1}\subset Q_j^k}|Q_l^{k+1}|\\
&<&c_n^{-\frac{k+1}{m}}\sum_{Q_l^{k+1}\subset Q_j^k}\prod_{i=1}^m\Big(\int_{Q_l^{k+1}}f_i\Big)^{1/m}\\
&\le& c_n^{-\frac{k+1}{m}}\prod_{i=1}^m\Big(\int_{Q_j^{k}}f_i\Big)^{1/m}\le 2^nc_n^{-1/m}|Q_j^k|.
\end{eqnarray*}
Hence, taking $c_n=2^{m(n+1)}$, we obtain that the family $\{Q_j^k\}$ is sparse, and
$${\mathcal M}^d(\vec f\,)(x)\le 2^{m(n+1)}{\mathcal A}_{{\mathcal{D}},{\mathcal S}}(\vec f\,)(x).$$

Applying the same argument to each ${\mathcal M}^{{\mathscr{D}}_{\a}}(\vec f\,)$ and using (\ref{estd}),
we get the statement of the lemma.
\end{proof}

\subsection{Multilinear Calder\'on-Zygmund operators} \label{MCZO}
Let $T$ be a multilinear operator initially defined on the $m$-fold
product of Schwartz spaces and taking values into the space of
tempered distributions,
$$T:S(\Rn)\times\dots\times S({\mathbb R}^n)\to
S'({\mathbb R}^n).$$
We say that $T$ is an $m$-linear
Calder\'on-Zygmund operator if, for some $1\le q_j<\infty$, it
extends to a bounded multilinear operator from
$L^{q_1}\times\dots\times L^{q_m}$ to $L^q$, where
$\frac1q=\frac{1}{q_1}+\dots+\frac{1}{q_m}$, and if
 there exists a function
$K$, defined
 off the diagonal $x=y_1=\dots=y_m$ in $({\mathbb
R}^n)^{m+1}$,  satisfying
$$T(f_1,\dots,f_m)(x) =\int\limits_{({\mathbb R}^{n})^m}
K(x,y_1,\dots,y_m)f_1(y_1)\dots f_m(y_m)\,dy_1\dots dy_m
$$
for all $x \not\in \cap_{j=1}^{m }\,\,\text{supp}\,\, f_j$,
$$|K(y_0,y_1,\dots,y_m)|\le \frac{A}{
\Bigl(\sum\limits_{k,l=0}^m|y_k-y_l|\Bigr)^{mn}},
$$
and
$$
|K(y_0,\dots,y_j,\dots,y_m)-K(y_0,\dots,y_j',\dots,y_m)|
\le\frac{A|y_j-y_j'|^{\e}}{\Bigl(
\sum\limits_{k,l=0}^m|y_k-y_l|\Bigr)^{mn+\e}},
$$
for some $\e>0$ and all $0\le j\le m$, whenever $\displaystyle|y_j-y_j'|\le
\frac{1}{2}\max_{0\le k\le
m}|y_j-y_k|$. %

It was shown in \cite{GT1} that if
$\frac{1}{r_1}+\dots+\frac{1}{r_m}=\frac{1}{r}$, then an $m$-linear
Calder\'on-Zygmund operator satisfies
\begin{equation*}
T: L^{r_1}(\Rn)\times\dots\times L^{r_m}(\Rn) \to L^{r}(\Rn)
\end{equation*}
 when $1<r_j<\infty$ for all $j=1,\cdots, m$. Similarly if $1\leq r_j \leq \infty$ for all $j=1,\cdots, m$,  we have
\begin{equation}\label{weaktype}
T:L^{1}(\Rn)\times \dots \times L^{1}(\Rn)\to L^{1/m,\infty}(\Rn)
\end{equation}

\subsection{A ``local mean oscillation decomposition"}
The non-increasing rearrangement of a measurable function $f$ on ${\mathbb R}^n$ is defined by
$$f^*(t)=\inf\{\a>0:|\{x\in {\mathbb R}^n:|f(x)|<\a\}|<t\}\quad(0<t<\infty).$$

Given a measurable function $f$ on ${\mathbb R}^n$ and a cube $Q$,
the local mean oscillation of $f$ on $Q$ is defined by
$$\o_{\la}(f;Q)=\inf_{c\in {\mathbb R}}
\big((f-c)\chi_{Q}\big)^*\big(\la|Q|\big)\quad(0<\la<1).$$

By a median value of $f$ over $Q$ we mean a possibly nonunique, real
number $m_f(Q)$ such that
$$\max\big(|\{x\in Q: f(x)>m_f(Q)\}|,|\{x\in Q: f(x)<m_f(Q)\}|\big)\le |Q|/2.$$

It is easy to see that the set of all median values of $f$ is either one point or the closed interval. In the latter case we will assume for
the definiteness that $m_f(Q)$ is the {\it maximal} median value. Observe that it follows from the definitions that
\begin{equation}\label{pro1}
|m_f(Q)|\le (f\chi_Q)^*(|Q|/2).
\end{equation}

Given a cube $Q_0$, the dyadic local sharp maximal
function $m^{\#,d}_{\la;Q_0}f$ is defined by
$$m^{\#,d}_{\la;Q_0}f(x)=\sup_{x\in Q'\in
{\mathcal D}(Q_0)}\o_{\la}(f;Q').$$

The following theorem was proved in \cite{Ler1} (its very similar version can be found in \cite{L1}).

\begin{theorem}\label{decom1} Let $f$ be a measurable function on
${\mathbb R}^n$ and let $Q_0$ be a fixed cube. Then there exists a
(possibly empty) sparse family of cubes $Q_j^k\in {\mathcal D}(Q_0)$
such that for a.e. $x\in Q_0$,
$$
|f(x)-m_f(Q_0)|\le
4m_{\frac{1}{2^{n+2}};Q_0}^{\#,d}f(x)+2\sum_{k,j}
\o_{\frac{1}{2^{n+2}}}(f;Q_j^k)\chi_{Q_j^k}(x).
$$
\end{theorem}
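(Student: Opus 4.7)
The plan is to construct the sparse family by an iterated Calderón--Zygmund stopping-time argument inside $Q_0$, in the spirit of the local Fefferman--Stein and Lerner decompositions. The key analytic input is the elementary inequality that for any cube $Q$ and $0<\lambda<1/2$,
\begin{equation*}
\bigl|\{x\in Q:|f(x)-m_f(Q)|>2\omega_\lambda(f;Q)\}\bigr|\le\lambda|Q|,
\end{equation*}
which follows from the definitions: if $c$ approximates the infimum defining $\omega_\lambda(f;Q)$, then $|f-c|\le\omega_\lambda(f;Q)$ on a set of measure exceeding $|Q|/2$, so $|m_f(Q)-c|\le\omega_\lambda(f;Q)$, and the triangle inequality yields the claim.

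Fix $\lambda=2^{-(n+2)}$ and build the family recursively: put $Q_0$ at level $0$, and given the level-$k$ cubes $\{Q_j^k\}$, define $\{Q_l^{k+1}\}$ inside each $Q_j^k$ as the maximal proper dyadic subcubes $P\subsetneq Q_j^k$ such that
\begin{equation*}
\bigl|\{x\in P:|f(x)-m_f(Q_j^k)|>2\omega_\lambda(f;Q_j^k)\}\bigr|>2^{-(n+1)}|P|.
\end{equation*}
Setting $\Omega_{k+1}=\bigcup_l Q_l^{k+1}$, three properties follow. (Sparseness) The disjointness of the $Q_l^{k+1}$ inside $Q_j^k$ together with the basic estimate yield $|\Omega_{k+1}\cap Q_j^k|\le 2^{n+1}\lambda|Q_j^k|=\tfrac12|Q_j^k|$. (Jump estimate) By maximality, the dyadic parent of each $Q_l^{k+1}$ is not selected, so the bad-set density in it is at most $2^{-(n+1)}$; consequently the good set occupies a density strictly greater than $1/2$ of $Q_l^{k+1}$, pinning down the median to satisfy $|m_f(Q_l^{k+1})-m_f(Q_j^k)|\le 2\omega_\lambda(f;Q_j^k)$. (Pointwise control outside) By Lebesgue differentiation, $|f(x)-m_f(Q_j^k)|\le 2\omega_\lambda(f;Q_j^k)$ for a.e.\ $x\in Q_j^k\setminus\Omega_{k+1}$.

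The proof finishes by telescoping. For a.e.\ $x\in Q_0$, let $Q_0=Q^{(0)}\supsetneq\cdots\supsetneq Q^{(N)}$ be the maximal chain of selected cubes containing $x$ (finite a.e.\ by sparseness). Writing
\begin{equation*}
f(x)-m_f(Q_0)=\bigl(f(x)-m_f(Q^{(N)})\bigr)+\sum_{i=1}^{N}\bigl(m_f(Q^{(i)})-m_f(Q^{(i-1)})\bigr),
\end{equation*}
each term is at most $2\omega_\lambda(f;Q^{(i)})$ or $2\omega_\lambda(f;Q^{(i-1)})$ by the previous paragraph. Absorbing the contribution of $Q^{(0)}=Q_0$ and, in the boundary case $N=0$, of $Q^{(N)}=Q_0$ into $4m_{\lambda;Q_0}^{\#,d}f(x)$ (via $\omega_\lambda(f;Q_0)\le m_{\lambda;Q_0}^{\#,d}f(x)$), the remaining contributions give $2\sum_{k,j}\omega_\lambda(f;Q_j^k)\chi_{Q_j^k}(x)$, which is the asserted decomposition.

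The main technical obstacle is to calibrate the selection threshold and the parameter $\lambda$ simultaneously: the threshold must be small enough that the selected cubes pack with total measure at most $\tfrac12|Q_j^k|$ (for sparseness), yet large enough that the bad mass in the non-selected dyadic parent stays below $\tfrac12|P|$ (so the median of the child $P$ still lies within $2\omega_\lambda(f;Q_j^k)$ of $m_f(Q_j^k)$). The choice $\lambda=2^{-(n+2)}$ with threshold $2^{-(n+1)}$ is precisely what these two requirements simultaneously permit.
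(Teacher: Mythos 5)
Your construction is exactly the Calder\'on--Zygmund stopping-time argument on the bad sets $\{|f-m_f(Q_j^k)|>2\omega_\lambda(f;Q_j^k)\}$ with telescoping of medians, which is the approach of the cited sources \cite{Ler1,L1}; the proof is essentially correct and matches that route. One small calibration slip: with the \emph{strict} selection threshold $|\{\cdot\}\cap P|>2^{-(n+1)}|P|$, the non-selected parent only gives $|E\cap\hat P|\le 2^{-(n+1)}|\hat P|$, so the good set in a selected child has measure $\ge\tfrac12|Q_l^{k+1}|$, not strictly greater, and the median-pinning step can fail in the borderline case of equality (indeed the rearrangement in \eqref{pro1} is defined with a strict inequality precisely to cope with such cases). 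Replacing $>$ by $\ge$ in the selection criterion makes the parent's density strictly below $2^{-(n+1)}$ and hence the child's good-set density strictly above $\tfrac12$, without affecting the packing bound $\sum_l|Q_l^{k+1}|\le 2^{n+1}|E_j^k|\le\tfrac12|Q_j^k|$, so the argument goes through.
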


\subsection{Banach function spaces}
For a general account of Banach function spaces we refer to
\cite[Ch. 1]{BS}. We mention only several notions which will be
used below.

The associate space $X'$ consists of measurable functions $f$ for which
$$\|f\|_{X'}=\sup_{\|g\|_{X}\le 1}\int_{{\mathbb R}^n}|f(x)g(x)|dx<\infty.$$
This definition implies the following H\"older inequality:
\begin{equation}\label{hol}
\int_{{\mathbb R}^n}|f(x)g(x)|dx\le \|f\|_{X}\|g\|_{X'}.
\end{equation}
Further \cite[p. 13]{BS},
\begin{equation}\label{dua}
\|f\|_{X}=\sup_{\|g\|_{X'}=1}\int_{{\mathbb R}^n}|f(x)g(x)|dx.
\end{equation}


\section{Proof of Theorems \ref{Main_thm}, \ref{multbuckley} and \ref{TwoWeights_Thm}}
In the proof of Theorem \ref{Main_thm} we shall use the following reverse H\"older property of $A_{\infty}$ weights proved in \cite{HP}:
if $w\in A_{\infty}$, then
\begin{equation}\label{AInfty_RHI}
\left(\frac{1}{|Q|}\int_Qw^{r(w)}\right)^{1/r(w)}\leq 2 \frac{1}{|Q|}\int_{Q} w,
\end{equation}
where $r(w)= 1+\frac{1}{\tau_n [w]_{A_{\infty}}}$ and $\tau_{n}=2^{11+n}$. Observe that $r'(w)\approx [w]_{A_{\infty}}$.

\begin{proof}[Proof of Theorem \ref{Main_thm}]
By (\ref{estd}), it suffices to prove the theorem for the dyadic maximal operators ${\mathcal M}^{{\mathscr{D}}_{\a}}$.
Since the proof is independent of the particular dyadic grid, without loss of generality we consider
${\mathcal M}^{d}$ taken with respect to the standard dyadic grid ${\mathcal D}$.

Let $a=2^{m(n+1)}$. and $\Omega_k=\{x\in {\mathbb R}^n: {\mathcal M}^d(\vec f\,)(x)>a^k\}$.
We have seen in the proof of Lemma \ref{appr} that $\Omega_k=\cup_jQ_j^k$,
where the family $\{Q_j^k\}$ is sparse and $a^{k}<\prod_{i=1}^m\frac{1}{|Q_j^k|}\int_{Q_j^k}|f_i|\le 2^{nm}a^{k}.$
It follows that
  \begin{eqnarray*}
   && \int_{\Rn} \Mm^d (\vec f)^p \,\nu_{\vec w} dx = \sum_{k} \int_{ \Omega_{k}\setminus\Omega_{k+1}}{\mathcal M}^d(\vec f)^{p}\,\nu_{\vec w} dx \\
    &&\leq a^p\sum_{k,j} \left(\prod_{i=1}^m\frac{1}{|Q_j^k|}\int_{Q_j^k}|f_i|dy_i\right)^{p}\nu_{\vec w}(Q_j^k)\\
    &&\leq a^p\sum_{k,j} \left(\prod_{i=1}^m\frac{1}{|Q_j^k|}\int_{Q_j^k}|f_i|w_i^{\frac{1}{p_i}}w_i^{-\frac{1}{p_i}}dy_i\right)^{p}\nu_{\vec w}(Q_j^k)\\
    &&\leq a^p \sum_{k,j} \prod_{i=1}^m \left(\frac{1}{|Q_j^k|}\int_{Q_j^k}|f_i|^{\alpha_i} w_i^{\frac{\alpha_i}{p_i}}dy_i\right)^{\frac{p}{\alpha_i}} \left(\frac{1}{|Q_j^k|}\int_{Q_j^k}w_i^{-\frac{\alpha_i'}{p_i}}dy_i \right)^{\frac{p}{\alpha_i'}}\nu_{\vec w}(Q_j^k),
  \end{eqnarray*}
  where $\alpha_i=(p_i'r_i)'$ and $r_i$ is the exponent in the sharp reverse H\"older inequality  \eqref{AInfty_RHI} for the weights $\sigma_i$ which are in $A_{\infty}$ for $i=1,\ldots,m$. Applying \eqref{AInfty_RHI} for each $\sigma_i$, we obtain
  \begin{equation*}\begin{split}
     \int_{\Rn} \Mm^d (\vec f)^p \,\nu_{\vec w}dx &\leq a^p \sum_{k,j} \prod_{i=1}^m \left(\frac{1}{|Q_j^k|}\int_{Q_j^k}|f_i|^{\alpha_i} w_i^{\frac{\alpha_i}{p_i}}dy_i\right)^{\frac{p}{\alpha_i}} \\ &\times\left(2\frac{1}{|Q_j^k|}\int_{Q_j^k}\sigma_i\right)^{\frac{p}{p_i'}}\nu_{\vec w}(Q_j^k) \\
     &\leq C[\vec w]_{A_{\vec P}} \sum_{k,j}\prod_{i=1}^m \left(\frac{1}{|Q_j^k|}\int_{Q_j^k}|f_i|^{\alpha_i}w_i^{\frac{\alpha_i}{p_i}}dy_i\right)^{\frac{p}{\alpha_i}}|Q_j^k|.
  \end{split}\end{equation*}

Let $E_j^k$ be the sets associated with the family $\{Q_j^k\}$.
Using the properties of $E_j^k$ and H\"older's inequality with the exponents $p_i/p$, we get
  \begin{equation*}\begin{split}
    \int_{\Rn} \Mm^d (\vec f)^p \,\nu_{\vec w} dx &\leq 2C [\vec w]_{A_{\vec P}} \sum_{k,j}\prod_{i=1}^m \left(\frac{1}{|Q_j^k|}\int_{Q_j^k}|f_i(y_i)|^{\alpha_i}w_i^{\frac{\alpha_i}{p_i}}dy_i\right)^{\frac{p}{\alpha_i}}|E_j^k| \\
  &\leq 2C[\vec w]_{A_{\vec P}}\sum_{k,j}\int_{E_j^k}\prod_{i=1}^m M\left(|f_i|^{\alpha_i}w_i^{\frac{\alpha_i}{p_i}}\right)^{\frac{p}{\alpha_i}}dx \\
     &\leq 2 C[\vec w]_{A_{\vec P}} \int_{\Rn}\prod_{i=1}^m M\left(|f_i|^{\alpha_i}w_i^{\frac{\alpha_i}{p_i}}\right)^{\frac{p}{\alpha_i}}dx \\
    &\leq 2 C[\vec w]_{A_{\vec P}} \prod_{i=1}^m \left(\int_{\Rn} M\left(|f_i|^{\alpha_i}w_i^{\frac{\alpha_i}{p_i}}\right)^{\frac{p_i}{\alpha_i}}dx\right)^{\frac{p}{p_i}}.\\
    \end{split}\end{equation*}
From this and by the boundedness of $M$,
  \begin{equation*}\begin{split}
  \int_{\Rn} \Mm^d (\vec f)^p \,\nu_{\vec w} dx  &\leq C [\vec w]_{A_{\vec P}} \prod_{i=1}^m \big((p_i/\alpha_i)'\big)^{\frac{p}{p_i}}\Big\| |f_i|^{\alpha_i}w_i^{\frac{\alpha_i}{p_i}}\Big\|^{\frac{p}{\alpha_i}}_{L^{\frac{p_i}{\alpha_i}}(\Rn)}\\
    &\leq C[\vec w]_{A_{\vec P}} \prod_{i=1}^m (p_i' r_i')^{\frac{p}{p_i}}\|f_i\|_{L^{p_i}(w_i)}^p  \\
    &\leq C [\vec w]_{A_{\vec P}} \prod_{i=1}^m ([\sigma_i]_{A_{\infty}})^{\frac{p}{p_i}} \|f_i\|_{L^{p_i}(w_i)}^p,
  \end{split}\end{equation*}
  where in next to last inequality we have used that $(p_i/\alpha_i)'\leq p_i'r_i'$ and in the last inequality we have used that $r_i'\approx [\sigma_i]_{A_{\infty}}$, for $i=1,\ldots,m$. This completes the proof of (\ref{OneWeightBestConstant}).

  Let us show now the sharpness of the exponents in this inequality.
Assume that $n=1$ and $0<\ve<1$. Let
  $$w_i(x)=|x|^{(1-\ve)(p_i-1)}\quad\text{and}\quad f_i(x)=x^{-1+\ve}\chi_{(0,1)}(x),\hspace{1em} i=1,\ldots,m.$$
  It is easy to check that $\nu_{\vec{w}} = |x|^{(1-\ve)(pm-1)}$,
  \begin{equation}\label{Constante}
    [\vec{w}]_{A_{\vec{P}}} = [\nu_{\vec{w}}]_{A_{pm}}\approx (1/\ve)^{mp-1}\quad\text{and}\quad [\sigma_i]_{A_{\infty}}\le \frac{C}{\ve}.
  \end{equation}
    Also,
  \begin{equation}\label{Funciones}
    \prod_{i=1}^m \|f_i\|_{L^{p_i}(w_i)}=(1/\ve)^{1/p}.
  \end{equation}

Let $f=x^{-1+\ve}\chi_{(0,1)}(x)$. Then the left-hand side of (\ref{OneWeightBestConstant}) can be bounded from below as follows:
\begin{equation}\label{estmathc}
\|{\mathcal M}(\vec f\,)\|_{L^p(\nu_{\vec w})}=\|Mf\|_{L^{pm}(\nu_{\vec{w}})}^{m}\ge (1/\ve)^{m}
\|f\|_{L^{pm}(\nu_{\vec{w}})}^{m}=(1/\ve)^{m+1/p}.
\end{equation}
On the other hand, by (\ref{Constante}) and (\ref{Funciones}), the right-hand side of (\ref{OneWeightBestConstant}) is at most $(1/\ve)^{m+1/p}$.
Since $\ve$ is arbitrary, this shows that the exponents $1/p$ and $1/p_i$ on the right-hand side of (\ref{OneWeightBestConstant}) cannot be replaced by smaller ones.
\end{proof}

In order to get an upper bound for $\a$ in part (ii) of Theorem \ref{multbuckley}, we shall need the following technical lemma. Its proof follows the
same lines as the proof of \cite[Th. 3.6]{LOPTT}.

\begin{lemma}\label{technical_lemma}
Let $1< p_j<\infty,j=1,\dots,m$  and $\frac{1}{p}=\frac{1}{p_1}+\ldots+\frac{1}{p_m}$. If $\vec w\in A_{\vec{P}}$, then
  \begin{equation*}
    [\sigma_j]_{A_{\infty}}\leq C[\vec w]_{A_{\vec P}}^{p_j'/p}.
 \end{equation*}
\end{lemma}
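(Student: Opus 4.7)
The plan is to reduce to showing $[\sigma_j]_{A_{mp_j'}}\leq [\vec w]_{A_{\vec P}}^{p_j'/p}$, after which the conclusion follows from the standard fact (quoted in the introduction) that $[\sigma_j]_{A_{\infty}}\leq c_n [\sigma_j]_{A_{mp_j'}}$. The approach rests on the pointwise algebraic identity
$$w_j^{p/p_j}(x)=\nu_{\vec w}(x)\prod_{i\neq j}\sigma_i(x)^{p/p_i'},$$
which is immediate from $\nu_{\vec w}=\prod_i w_i^{p/p_i}$ and the computation $\sigma_i^{p/p_i'}=w_i^{-p/p_i}$. Setting $\gamma=1/((m-1)p_j+1)$, a short calculation verifies that $\sigma_j^{1-(mp_j')'}=w_j^{\gamma}$, and raising the identity above to the power $\gamma p_j/p$ yields
$$w_j^{\gamma}(x)=\nu_{\vec w}(x)^{\gamma p_j/p}\prod_{i\neq j}\sigma_i(x)^{\gamma p_j/p_i'}.$$

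The key step is to apply the generalized H\"older inequality to the right-hand side with exponents $a_0=p/(\gamma p_j)$ for $\nu_{\vec w}$ and $a_i=p_i'/(\gamma p_j)$ for the $\sigma_i$'s ($i\neq j$). The value of $\gamma$ is chosen precisely so that $\frac{1}{a_0}+\sum_{i\neq j}\frac{1}{a_i}=\gamma (p_j-1)(mp_j'-1)=1$, using $\sum_{i\neq j}\frac{1}{p_i'}=m-\frac{1}{p}-\frac{1}{p_j'}$. H\"older then delivers
$$\frac{1}{|Q|}\int_Q\sigma_j^{1-(mp_j')'} \leq \Big(\frac{1}{|Q|}\int_Q\nu_{\vec w}\Big)^{\gamma p_j/p}\prod_{i\neq j}\Big(\frac{1}{|Q|}\int_Q\sigma_i\Big)^{\gamma p_j/p_i'},$$
and raising this to the power $mp_j'-1=1/(\gamma(p_j-1))$ collapses the exponent on $\nu_{\vec w}$ to $p_j'/p$ and those on the $\sigma_i$ to $p_j'/p_i'$, by the identity $p_j/(p_j-1)=p_j'$.

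Finally, multiplying the resulting inequality by $\frac{1}{|Q|}\int_Q\sigma_j$ and comparing with the $A_{\vec P}$ condition raised to the $p_j'/p$-th power,
$$\Big(\frac{1}{|Q|}\int_Q\sigma_j\Big)\Big(\frac{1}{|Q|}\int_Q\nu_{\vec w}\Big)^{p_j'/p}\prod_{i\neq j}\Big(\frac{1}{|Q|}\int_Q\sigma_i\Big)^{p_j'/p_i'}\leq [\vec w]_{A_{\vec P}}^{p_j'/p},$$
one obtains $[\sigma_j]_{A_{mp_j'}}\leq [\vec w]_{A_{\vec P}}^{p_j'/p}$ after taking the supremum over cubes. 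The main technical obstacle is the choice of $\gamma$: it must simultaneously realize $\sigma_j^{1-(mp_j')'}$ as a clean power of $w_j$ \emph{and} ensure that the H\"older exponents $a_0,\{a_i\}_{i\neq j}$ sum to one. That both constraints force the same value $\gamma=1/((m-1)p_j+1)$ is the crux of the argument.
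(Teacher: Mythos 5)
Your proof is correct and follows essentially the same route as the paper: both reduce the claim to the bound $[\sigma_j]_{A_{mp_j'}}\le[\vec w]_{A_{\vec P}}^{p_j'/p}$ and establish it by a generalized H\"older inequality with the same exponents (your $a_0=p/(\gamma p_j)$ and $a_i=p_i'/(\gamma p_j)$ coincide with the paper's $q_j$ and $q_i$, since $\gamma=p/(p_jq_j)$). The only difference is presentational: you parametrize by the exponent $\gamma$ on $w_j$ and phrase the pointwise factorization directly in terms of $\nu_{\vec w}$ and the $\sigma_i$'s, whereas the paper introduces $q_j,q_i$ first and writes the same identity in terms of powers of the $w_i$'s.
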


\begin{proof} It was shown in \cite[Th. 3.6]{LOPTT} that if $\vec w\in A_{\vec{P}}$, then $\sigma_j\in A_{mp_j'}$. Our goal now is to check that
\begin{equation}\label{check}
[\sigma_j]_{A_{mp_j'}}\le [\vec w]_{A_{\vec P}}^{p_j'/p}.
\end{equation}
Since $[\sigma_j]_{A_{\infty}}\le C[\sigma_j]_{A_{mp_j'}}$, (\ref{check}) would imply the statement of the lemma.

Fix $1\le j\le m$, and define the numbers
$$q_j=p\Big(m-1+\frac{1}{p_j}\Big)\quad\text{and}\quad q_i=\frac{p_i}{p_i-1}\frac{q_j}{p},i\not=j.$$
Since
$$\sum_{i=1}^m\frac{1}{q_i}=\frac{1}{m-1+1/p_j}\Big(\frac{1}{p}+\sum_{i=1,i\not=j}^m(1-1/p_i)\Big)=1,$$
using H\"older inequality, we obtain
\begin{equation*}\begin{split}
\int_{Q}w_j^{\frac{p}{p_jq_j}}&=\int_{Q}
\Big(\prod_{i=1}^mw_i^{\frac{p}{p_iq_j}}\Big)\Big(\prod_{i=1,i\not=j}^mw_i^{-\frac{p}{p_iq_j}}\Big)\\
&\le
\Big(\int_Q\prod_{i=1}^mw_i^{p/p_i}\Big)^{1/q_j}\prod_{i=1,i\not=j}^m\Big(\int_Q
w_i^{-1/(p_i-1)}\Big)^{1/q_i}.
\end{split}\end{equation*}
From this,
\begin{equation*}\begin{split}
  &\left(\int_Qw_j^{1-p_j'}\right)\left(\int_Qw_j^{\frac{p}{p_jq_j}}\right)^{\frac{q_jp_j}{p(p_j-1)}} \\
  &\leq \left(\int_Qw_j^{1-p_j'}\right)\left[ \left(\int_Q \prod_{i=1}^m w_{i}^{p/p_i}\right)^{1/q_j} \prod_{i=1, i\neq j}^m \left( \int_Q w_i^{1-p_i'}\right)^{1/q_i} \right]^{\frac{q_jp_j}{p(p_j-1)}}\\
  &\leq \left(\int_Qw_j^{1-p_j'}\right)\left[ \left(\int_Q \prod_{i=1}^m w_{i}^{p/p_i}\right) \prod_{i=1, i\neq j}^m \left( \int_Q w_i^{1-p_i'}\right)^{q_j/q_i} \right]^{\frac{p_j'}{p}}.
\end{split}\end{equation*}
Since
\begin{equation*}
  \frac{q_j}{q_i}=\frac{p\left(m-1+\frac{1}{p_j}\right)}{\frac{p_i}{p_i-1}\frac{p\left(m-1+\frac{1}{p_j}\right)}{p}} =\frac{p}{p_i'},
\end{equation*}
we obtain
\begin{equation*}\begin{split}
   &\left(\int_Qw_j^{1-p_j'}\right)\left(\int_Qw_j^{\frac{p}{p_jq_j}}\right)^{\frac{q_jp_j}{p(p_j-1)}} \\
   &\leq \left(\int_Qw_j^{1-p_j'}\right)\left[ \left(\int_Q \prod_{i=1}^m w_{i}^{p/p_i}\right) \prod_{i=1, i\neq j}^m \left(\int_Q w_i^{1-p_i'}\right)^{p/p_i'} \right]^{\frac{p_j'}{p}}\\
   &\leq \left[ \left(\int_Q \prod_{i=1}^m w_{i}^{p/p_i}\right)\prod_{i=1}^m \left(\int_Q w_i^{1-p_i'}\right)^{p/p_i'} \right]^{\frac{p_j'}{p}}.
\end{split}\end{equation*}
%
%
Therefore,
\begin{eqnarray*}
A_{mp_j'}(\sigma_j;Q)&=&\left(\frac{1}{|Q|}\int_Qw_j^{1-p_j'}\right)\left(\frac{1}{|Q|}\int_Qw_j^{\frac{p}{p_jq_j}}\right)^{\frac{q_jp_j}{p(p_j-1)}}\\
&\le& \left[ \left(\frac{1}{|Q|}\int_Q \nu_{\vec w}\right)\prod_{i=1}^m \left(\frac{1}{|Q|}\int_Q w_i^{1-p_i'}\right)^{p/p_i'} \right]^{\frac{p_j'}{p}}\\
&=& \left(A_{\vec P}(\vec w;Q)\right)^{p_j'/p},
\end{eqnarray*}
which proves (\ref{check}).
\end{proof}

\begin{proof}[Proof of Theorem \ref{multbuckley}]
We start with part (i). Consider the example given in the proof of Theorem \ref{Main_thm}.
Combining (\ref{Constante}), (\ref{Funciones}) and (\ref{estmathc}) with
$$
\|{\mathcal M}(\vec f\,)\|_{L^{p}(\nu_{\vec w})}\le C[\vec w]_{A_{\vec P}}^{\a}\prod_{j=1}^m\|f_j\|_{L^{p_j}(w_j)},
$$
we obtain $m+1/p\le \a(mp-1)+1/p$ which yields $\a\ge \frac{m}{mp-1}$.

Further, by Theorem \ref{Main_thm} and Lemma \ref{technical_lemma},
$$\a\le \frac{1}{p}+\sum_{i=1}^m\frac{1}{p_i}\frac{p_i'}{p}=\frac{1}{p}\Big(1+\sum_{i=1}^m\frac{1}{p_i-1}\Big).$$
This completes the proof of part (i).

Suppose now that $p_1=p_2=\dots=p_m=r$. Then $p=r/m$, $\nu_{\vec w}=\Big(\prod_{j=1}^mw_j\Big)^{1/m}$.
Denote
$$
A_{\vec P}(\vec w;Q)=\Big(\frac{1}{|Q|}\int_Q\nu_{\vec
w}\Big)\prod_{i=1}^m\Big(\frac{1}{|Q|}\int_Q
\si_i\Big)^{(r-1)/m},
$$
where $\si_i=w_i^{1-r'}$. Set also
$${\mathcal M}_{\vec{\si}}(\vec f\,)(x)=\sup_{Q\ni x}\prod_{i=1}^m\frac{1}{\si_i(Q)}\int_Q|f_i|.$$

We will follow the method of the proof of Buckley's theorem given in \cite{Ler3}. By (\ref{estd}), without loss of generality
we may assume that the maximal operators considered below are dyadic.
We get
$$
\prod_{i=1}^m\frac{1}{|Q|}\int_Q|f_i|=A_{\vec P}(\vec w;Q)^{\frac{m}{r-1}}\left(\frac{|Q|}{\nu_{\vec w}(Q)}\Big(\prod_{i=1}^m\frac{1}{\si_i(Q)}\int_Q|f_i|\Big)^{\frac{r-1}{m}}\right)^{\frac{m}{r-1}}.
$$
Hence,
$$
{\mathcal M}(\vec f\,)(x)\le [\vec w]_{A_{\vec P}}^{\frac{m}{r-1}}M_{\nu_{\vec w}}\big({\mathcal M}_{\vec{\si}}(\vec f\,)^{\frac{r-1}{m}}\nu_{\vec w}^{-1}\big)(x)^{\frac{m}{r-1}}.
$$
From this, using H\"older's inequality and
the boundedness of the weighted dyadic maximal operator with the implicit constant independent of the weight, we obtain

\begin{eqnarray*}
\|{\mathcal M}(\vec f\,)\|_{L^p(\nu_{\vec w})}&\le& [\vec w]_{A_{\vec P}}^{\frac{m}{r-1}}
\|M_{\nu_{\vec w}}\big({\mathcal M}_{\vec{\si}}(\vec f\,)^{\frac{r-1}{m}}\nu_{\vec w}^{-1}\big)\|_{L^{r'}(\nu_{\vec w})}^{\frac{m}{r-1}}\\
&\le& C[\vec w]_{A_{\vec P}}^{\frac{m}{r-1}}\|{\mathcal M}_{\vec{\si}}(\vec f\,)\|_{L^{r/m}(\nu_{\vec w}^{1-r'})}\\
&\le& C[\vec w]_{A_{\vec P}}^{\frac{m}{r-1}}\prod_{i=1}^m\|M_{\si_i}(f_i\si_i^{-1})\|_{L^r(\si_i)}\\
&\le& C[\vec w]_{A_{\vec P}}^{\frac{m}{r-1}}\prod_{i=1}^m\|f_i\|_{L^r(w_i)}.
\end{eqnarray*}
This proves that $\a\le \frac{m}{r-1}$. But if $p_1=p_2=\dots=p_m=r$, then $\frac{m}{mp-1}=\frac{m}{r-1}$.
Hence, using part (i), we get that $\a=\frac{m}{r-1}$.
\end{proof}

\begin{proof}[Proof of Theorem \ref{TwoWeights_Thm}]
We start exactly as in the proof of Theorem \ref{Main_thm}. It suffices to prove the main result for ${\mathcal M}^{d}$.
Let $\Omega_k=\{x\in {\mathbb R}^n: {\mathcal M}^d(\vec f\,)(x)>a^k\}=\cup_j{Q_j^k},$ where $a=2^{m(n+1)}$.
Then
\begin{eqnarray*}
\int_{\Rn} \Mm^d (\vec f)^p \,u dx\le
a^p\sum_{k,j} \left(\prod_{i=1}^m\frac{1}{|Q_j^k|}\int_{Q_j^k}|f_i|v_i^{\frac{1}{p_i}}v_i^{-\frac{1}{p_i}}dy_i\right)^{p}u(Q_j^k).
\end{eqnarray*}
By the generalized H\"older inequality (\ref{hol}),
$$
\frac{1}{|Q_j^k|}\int_{Q_j^k}|f_i|v_i^{\frac{1}{p_i}}v_i^{-\frac{1}{p_i}}dy_i\le \|f_iv_i^{\frac{1}{p_i}}\|_{X_i',Q_j^k}\|v_i^{-\frac{1}{p_i}}\|_{X_i,Q_j^k}.
$$
Combining this with the previous estimate, using the properties of the sets $E_j^k$ associated with $\{Q_j^k\}$,
and applying H\"older's inequality, we obtain
\begin{eqnarray*}
&&\int_{\Rn} \Mm^d (\vec f)^p \,u dx\le
a^p\sum_{k,j} \left(\prod_{i=1}^m\|f_iv_i^{\frac{1}{p_i}}\|_{X_i',Q_j^k}\|v_i^{-\frac{1}{p_i}}\|_{X_i,Q_j^k}\right)^{p}\frac{u(Q_j^k)}{|Q_j^k|}|Q_j^k|\\
&&\le 2a^pK^p\sum_{k,j}\left(\prod_{i=1}^m\|f_iv_i^{\frac{1}{p_i}}\|_{X_i',Q_j^k}\right)^{p}|E_j^k|
\le 2a^pK^p \Big\|\prod_{i=1}^m M_{X_i'}(f_iv_i^{1/p_i})\Big\|_{L^p}^{p}\\
&&\le 2a^pK^p\prod_{i=1}^m\|M_{X_i'}(f_iv_i^{1/p_i})\|_{L^{p_i}}^{p}\le
2a^pK^p\prod_{i=1}^{m}\|M_{X_i'}\|_{L^{p_i}}^p\|f_i\|_{L^{p_i}(v_i)}^p,
\end{eqnarray*}
which completes the proof.
\end{proof}

\section{Proof of Theorem \ref{multcz}}
Theorem 3.2 from \cite{LOPTT} says that
$M_{\d}^{\#}(T(\vec f\,))(x)\le C {\mathcal M}(\vec f\,)(x)$,
where $M^{\#}$ is the standard Fefferman-Stein sharp function and $M_{\d}^{\#}(f)=M^{\#}(|f|^{\d})^{1/\d}$.
A simple examination of the proof of this result yields the following local mean oscillation estimate for
$T(\vec f\,)$.

\begin{prop}\label{oscsin} For any cube $Q\subset {\mathbb R}^n$,
\begin{equation}\label{loc1}
\o_{\la}(T(\vec f\,);Q)\le c(T,\la,n)\sum_{l=0}^{\infty}\frac{1}{2^{l\e}}\prod_{i=1}^m\left(\frac{1}{|2^lQ|}\int_{2^lQ}|f_i(y)|dy\right).
\end{equation}
\end{prop}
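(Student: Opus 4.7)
The plan is to proceed by a direct analysis analogous to the sharp-function estimate $M_\delta^{\#}(T(\vec f\,))\le C\mathcal{M}(\vec f\,)$ from \cite{LOPTT}, but tracking the bound pointwise rather than passing through $M^{\#}$. Fix a cube $Q$ with center $x_0$ and a dimensional constant $c_n$ (any $c_n\ge 4\sqrt{n}$ works) chosen so that $|x-x_0|\le\tfrac12|x_0-y_i|$ whenever $x\in Q$ and $y_i\notin c_nQ$. Decompose each $f_i=f_i^{0}+f_i^{\infty}$ with $f_i^{0}=f_i\chi_{c_nQ}$, and expand by multilinearity:
\begin{equation*}
T(\vec f\,)=T(f_1^{0},\dots,f_m^{0})+\sum_{\vec a\neq\vec 0}T(f_1^{a_1},\dots,f_m^{a_m}),\qquad\vec a\in\{0,\infty\}^m.
\end{equation*}
Choosing $c=\sum_{\vec a\neq\vec 0}T(f_1^{a_1},\dots,f_m^{a_m})(x_0)$ in the infimum defining $\o_\lambda$ gives
\begin{equation*}
\o_\lambda(T(\vec f\,);Q)\le\big(T(f_1^{0},\dots,f_m^{0})\chi_Q\big)^*(\lambda|Q|)+\sum_{\vec a\neq\vec 0}\sup_{x\in Q}\big|T(f_1^{a_1},\dots,f_m^{a_m})(x)-T(f_1^{a_1},\dots,f_m^{a_m})(x_0)\big|.
\end{equation*}

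The local piece is controlled by the weak-type bound \eqref{weaktype} combined with the elementary rearrangement fact $g^{*}(t)\le\|g\|_{L^{1/m,\infty}}t^{-m}$, producing a bound of the form $C_{T,\lambda,n}\prod_i|c_nQ|^{-1}\int_{c_nQ}|f_i|$; this fits into a single summand of the desired right-hand side at scale $l\approx\log_2 c_n$. For each mixed term the existence of an index $j$ with $a_j=\infty$ forces the relevant integration variable $y_j\notin c_nQ$, which both validates the kernel-smoothness hypothesis for $x,x_0\in Q$ and guarantees $M(\vec y):=\max_i|y_i-x_0|\gtrsim_n\ell_Q$. Using $\sum_{k,l}|y_k-y_l|\ge M(\vec y)$ in the H\"older estimate then yields
\begin{equation*}
|K(x,\vec y)-K(x_0,\vec y)|\le\frac{A\,\ell_Q^{\e}}{M(\vec y)^{mn+\e}}\qquad(x\in Q).
\end{equation*}

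The main simplification for the multilinear setting is to decompose the integration region by the scalar quantity $M(\vec y)$ alone, rather than by annuli in each variable separately. On $\{\vec y:2^l\ell_Q\le M(\vec y)<2^{l+1}\ell_Q\}$, every $y_i$ lies in $2^{l+2}Q$ and the kernel bound is uniformly $\le A\ell_Q^{\e}/(2^l\ell_Q)^{mn+\e}$, so integrating and collecting volume factors gives a contribution of order
\begin{equation*}
\frac{C}{2^{l\e}}\prod_{i=1}^m\frac{1}{|2^{l+2}Q|}\int_{2^{l+2}Q}|f_i|.
\end{equation*}
Summing over $l\ge l_0(c_n)$ and reindexing $l+2\mapsto l$ produces exactly the right-hand side of \eqref{loc1}. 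The one genuine obstacle is resisting the temptation to decompose each variable into its own dyadic annulus, which would force a messy multi-index geometric sum that must then be collapsed into a single-index one; passing through the single scalar $M(\vec y)$ avoids this and is the main technical insight. The remainder of the argument is routine bookkeeping around the constant $c_n$ and the harmless index shift at the end.
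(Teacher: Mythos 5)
Your proposal is correct and follows essentially the same route the paper has in mind: the paper does not write out a proof but simply observes that it is obtained by inspecting the proof of \cite[Theorem 3.2]{LOPTT}, which is exactly the local/far decomposition $f_i=f_i\chi_{c_nQ}+f_i\chi_{(c_nQ)^c}$, the weak-type $(1,\dots,1;1/m)$ bound for the purely local term, and the kernel-regularity estimate for the mixed terms that you carry out. One cosmetic remark: your use of the rearrangement bound $g^*(t)\le\|g\|_{L^{1/m,\infty}}t^{-m}$ on the local piece is cleaner than the $M^\#_\delta$ trick in \cite{LOPTT} (which raises to a small power $\delta$ precisely because $T(\vec f^0)$ need not be locally integrable), and this substitution is the natural one when working with $\o_\lambda$ rather than the Fefferman--Stein sharp function; otherwise the two arguments coincide.
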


We turn now to the proof of Theorem \ref{multcz}. Combining Proposition \ref{oscsin} and Theorem \ref{decom1} with $Q_0\in {\mathcal D}$, we get
that there exists a sparse family ${\mathcal S}=\{Q_j^k\}\in {\mathcal D}$ such that for a.e. $x\in Q_0$,
\begin{equation}\label{point1}
|T(\vec f\,)(x)-m_{Q_0}(T(\vec f\,))|\le c\Big(\mathcal M(\vec f\,)(x)+\sum_{l=0}^{\infty}\frac{1}{2^{l\e}}{\mathcal T}_{\mathcal S,l}(\vec{|f|}\,)(x)\Big),
\end{equation}
where $c=c(n,T)$ and
$$
{\mathcal T}_{\mathcal S,l}(\vec f\,)(x)=\sum_{j,k}\Big(\prod_{i=1}^m(f_i)_{2^lQ_j^k}\Big)\chi_{Q_j^k}(x).
$$

By the weak type property of the $m$-linear Calder\'on-Zygmund operators \eqref{weaktype}, assuming, for instance, that each $f_i$ is bounded and with compact support, we get $(T(\vec f\,))^*(+\infty)=~0$. Hence,
it follows from (\ref{pro1}) that $|m_Q\big(T(\vec f\,)\big)|\to 0$ as $|Q|\to\infty$. Therefore, letting $Q_0$ to anyone of $2^n$ quadrants and
using Fatou's lemma and (\ref{point1}), we obtain
$$
\|T(\vec f\,)\|_{X}\le c(n,T)\Big(\|\mathcal M(\vec f\,)\|_X+\sum_{l=0}^{\infty}\frac{1}{2^{l\e}}\sup_{{\mathcal S}\in {\mathcal D}}\|{\mathcal T}_{\mathcal S,l}(\vec f\,)\|_{X}\Big).
$$

By Lemma \ref{appr},
$$\|\mathcal M(\vec f\,)\|_X\le c(m,n)\sup_{{\mathscr{D}},{\mathcal S}}\|{\mathcal A}_{{\mathscr{D}},{\mathcal S}}\vec{|f|}\|_{X}.$$
Our goal now is to show that
\begin{equation}\label{eqTl}
\sup_{{\mathcal S}\in {\mathcal D}}\|{\mathcal T}_{\mathcal S,l}(\vec f\,)\|_{X}
\le c(m,n)l\sup_{{\mathscr{D}},{\mathcal S}}\|{\mathcal A}_{{\mathscr{D}},{\mathcal S}}\vec{|f|}\|_{X}.
\end{equation}
This estimate along with the two previous ones would complete the proof.

\subsection{Several auxiliary operators} Assume that $f_i\ge 0$.
Fix ${\mathcal S}=\{Q_j^k\}\in {\mathcal D}$. Applying Proposition \ref{prhp}, we can decompose the cubes $Q_j^k$ into $2^n$ disjoint families $F_{\a}$ such that for any $Q_j^k\in F_{\a}$ there exists
a cube $P_{j,k}^{l,\a}\in {\mathscr{D}}_{\a}$ such that $2^lQ_j^k\subset P_{j,k}^{l,\a}$ and
$\ell_{P_{j,k}^{l,\a}}\le 6\ell_{2^lQ_j^k}$. Hence,
\begin{equation}\label{estTl}
{\mathcal T}_{\mathcal S,l}(\vec f\,)(x)\le 6^{nm}\sum_{\a=1}^{2^n}\sum_{j,k:Q_j^k\in F_{\a}}\Big(\prod_{i=1}^m(f_i)_{P_{j,k}^{l,\a}}\Big)\chi_{Q_j^k}(x).
\end{equation}
Denote
$$
{\mathcal T}_{l,\a}(\vec f\,)(x)=\sum_{j,k}\Big(\prod_{i=1}^m(f_i)_{P_{j,k}^{l,\a}}\Big)\chi_{Q_j^k}(x).
$$

We shall also need the following auxiliary operator
$$\mathscr{M}_{l,\a}(\vec f_{1,\dots,m-1},g)(x)=\sum_{j,k}\Big(\prod_{i=1}^{m-1}(f_i)_{P_{j,k}^{l,\a}}\Big)\Big(
\frac{1}{|P_{j,k}^{l,\a}|}\int_{Q_j^k}g\Big)\chi_{P_{j,k}^{l,\a}}(x).$$
This object appears naturally in the following duality relation:
\begin{equation}\label{dual}
\int_{{\mathbb R}^n}{\mathcal T}_{l,\a}(\vec f\,)g\,dx=\int_{{\mathbb R}^n}\mathscr{M}_{l,\a}(\vec f_{1,\dots,m-1},g)f_m\,dx.
\end{equation}

\begin{lemma}\label{oscM} For any cube $Q\in {\mathscr{D}}_{\a}$,
$$\o_{\la}(\mathscr{M}_{l,\a}(\vec f_{1,\dots,m-1},g);Q)\le c(\la,m,n)lg_Q\prod_{i=1}^{m-1}(f_i)_Q.$$
\end{lemma}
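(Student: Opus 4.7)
Fix $Q\in\mathscr{D}_\a$. My plan is to split $\mathscr{M}_{l,\a}(\vec f_{1,\ldots,m-1},g)$ on $Q$ into a constant part (the terms with $P_{j,k}^{l,\a}\supseteq Q$, which give $\chi_{P_{j,k}^{l,\a}}\equiv 1$ on $Q$) and an ``inner'' part
\[
\Phi(x) = \sum_{(j,k):\, P_{j,k}^{l,\a}\subsetneq Q} \prod_{i=1}^{m-1}(f_i)_{P_{j,k}^{l,\a}}\cdot \frac{1}{|P_{j,k}^{l,\a}|}\int_{Q_j^k}g\cdot\chi_{P_{j,k}^{l,\a}}(x).
\]
Since $Q_j^k\subseteq P_{j,k}^{l,\a}\subsetneq Q$ for these terms, we may assume $f_i$ and $g$ are supported in $Q$. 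Then $\o_\la(\mathscr{M}_{l,\a}(\vec f,g);Q)\le (\Phi\chi_Q)^*(\la|Q|)$, so it suffices to show that $|\{x\in Q:\Phi(x)> c(\la,m,n)\, l\, g_Q\prod_i(f_i)_Q\}|\le\la|Q|$.

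Next I would regroup the inner sum by $P\in\mathscr{D}_\a$: for each fixed $P$, the constraint $\ell_P\in\{2^l,2^{l+1},2^{l+2}\}\ell_{Q_j^k}$ (forced by $2^lQ_j^k\subseteq P$ and $\ell_P\le 6\cdot 2^l\ell_{Q_j^k}$) limits the $Q_j^k$'s with $P_{j,k}^{l,\a}=P$ to at most three side lengths, giving $\sum_{(j,k):\, P_{j,k}^{l,\a}=P}\chi_{Q_j^k}\le 3$ and hence $\frac{1}{|P|}\sum_{(j,k):\,P_{j,k}^{l,\a}=P}\int_{Q_j^k}g\le 3 g_P$. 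Thus
\[
\Phi(x)\le 3\sum_{P\subsetneq Q,\, P \text{ active}}\prod_{i=1}^{m-1}(f_i)_P\cdot g_P\cdot\chi_P(x).
\]
I would then introduce the exceptional set $E=\{x\in Q:M^{\mathscr{D}_\a}(f_i\chi_Q)(x)>A(f_i)_Q$ for some $i$, or $M^{\mathscr{D}_\a}(g\chi_Q)(x)>Ag_Q\}$ with $A=2(m+1)/\la$; the weak $(1,1)$ inequality yields $|E|\le\la|Q|/2$, and off $E$ every factor $(f_i)_P$ and $g_P$ arising above (with $x\in P\subsetneq Q$) is controlled by $A(f_i)_Q$ and $Ag_Q$ respectively.

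The main obstacle is then to control the remaining combinatorial factor $N(x)=|\{P\text{ active},\,x\in P\subsetneq Q\}|$ by $\lesssim l$ outside a further set of measure $\le\la|Q|/2$. A naive Markov estimate using the sparseness of $\{Q_j^k\}$ and $|P|\approx 2^{ln}|Q_j^k|$ only gives $\int_Q N\lesssim 2^{ln}|Q|$ and hence $N(x)\lesssim 2^{ln}/\la$, which is much too large. To recover the logarithmic factor $l$ I would exploit the $2^l$-dilation structure: each active $P$ sits above its $Q_j^k$ through a chain of $\mathscr{D}_\a$-cubes of length $\approx l$, so telescoping the average $(f_i)_{P_{j,k}^{l,\a}}$ along this chain decomposes $\mathscr{M}_{l,\a}$ into $O(l)$ ``single-scale'' operators, each resembling a standard multilinear sparse operator whose oscillation over $Q$ is bounded by $c(\la,m,n)\, g_Q\prod_i(f_i)_Q$ by the same exceptional-set argument as above. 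Summing these $O(l)$ contributions yields the desired bound.
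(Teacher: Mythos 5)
Your opening moves are sound and track the paper's: isolating the constant contribution from cubes $P_{j,k}^{l,\a}\supseteq Q$, and observing (correctly) that for a fixed $P$ the family $\{Q_j^k:P_{j,k}^{l,\a}=P\}$ has at most three possible side lengths and hence bounded overlap. However, you yourself flag that the real work is bounding the overlap of the enlarged cubes by $O(l)$, and the route you sketch for that does not close. First, the intermediate claim you aim for --- that the count $N(x)=|\{P \text{ active}: x\in P\subsetneq Q\}|$ is $\lesssim l$ off a set of measure $\le\la|Q|/2$ --- is not attainable in that pointwise form: a sparse family can contain a chain $Q_{j_k}^k$ of cubes nested arbitrarily deeply around a single point, producing a corresponding chain of distinct active $P$'s at scales $\approx 2^{l}\ell_{Q_{j_k}^k}$, so $N$ can be unbounded even after removing any fixed fraction of $Q$; the correct statement must be a weak-type estimate for the operator, not a pointwise bound on the overlap count. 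Second, the proposed telescoping of the averages $(f_i)_{P_{j,k}^{l,\a}}$ along ``a chain of $\mathscr{D}_\a$-cubes'' from $Q_j^k$ up to $P_{j,k}^{l,\a}$ is problematic because $Q_j^k$ lives in the standard grid $\mathcal D$ while $P_{j,k}^{l,\a}\in\mathscr D_\a$, so there is no canonical dyadic chain joining them; and even granting a telescoping, it acts on the $f_i$-side of the formula, whereas the source of the difficulty is the summation/overlap structure in the $g$-side variable $\sum_{j,k}\frac{1}{|P_{j,k}^{l,\a}|}\int_{Q_j^k}g\,\chi_{P_{j,k}^{l,\a}}$, which telescoping the $f$-averages does not touch. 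So the factor $l$ is still not produced by anything concrete in your argument.

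The paper's proof isolates this difficulty cleanly and resolves it by a citation. After the constant-part split it bounds the remainder pointwise by a product
$$
|\mathscr{M}_{l,\a}(\vec f_{1,\dots,m-1},g)-c|\chi_Q \le \prod_{i=1}^{m-1}M(f_i\chi_Q)\cdot \mathscr{T}_l(g\chi_Q),
\qquad
\mathscr{T}_l g = \sum_{j,k}\Big(\tfrac{1}{|P_{j,k}^{l,\a}|}\int_{Q_j^k}g\Big)\chi_{P_{j,k}^{l,\a}},
$$
and then uses the iterated rearrangement inequality $(fg)^*(t)\le f^*(t/2)g^*(t/2)$ together with the weak $(1,1)$ bound for $M$ and, crucially, the estimate $\|\mathscr{T}_l g\|_{L^{1,\infty}}\le c(n)\,l\,\|g\|_{L^1}$ from \cite[Lemma~3.2]{Ler2}. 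That external lemma is precisely where the factor $l$ comes from, and it is the step missing from your outline. If you wish to avoid exceptional sets altogether, adopting the paper's pointwise factorization and rearrangement argument is simpler; but either way you will need the weak-type $(1,1)$ bound for $\mathscr T_l$ (or an equivalent) as a standalone ingredient rather than expecting it to fall out of a telescoping of the $f_i$-averages.
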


\begin{proof}
Let $Q\in {\mathscr{D}}_{\a}$ and let $x\in Q$. We have
$$
\mathscr{M}_{l,\a}(\vec f_{1,\dots,m-1},g)(x)=\sum_{k,j:P_{j,k}^{m,\a}\subset Q}+\sum_{k,j:Q\subseteq P_{j,k}^{m,\a}}.
$$
The second sum is a constant (denote it by $c$) for $x\in Q$, while the first sum involves only the functions $f_i$ which are supported in $Q$.
We get the following simple estimate:
\begin{equation}\label{Ml}
|\mathscr{M}_{l,\a}(\vec f_{1,\dots,m-1},g)-c|\chi_Q(x)\le \prod_{i=1}^{m-1}M(f_i\chi_Q)(x)\mathscr{T}_l(g\chi_Q)(x),
\end{equation}
where
$$
\mathscr{T}_lg(x)=\sum_{j,k}\Big(\frac{1}{|P_{j,k}^{m,\a}|}\int_{Q_j^k}g\Big)\chi_{P_{j,k}^{m,\a}}(x).
$$

It was proved in \cite[Lemma 3.2]{Ler2} that $\|\mathscr{T}_lg\|_{L^{1,\infty}}\le c(n)l\|g\|_{L^1}$. Using this estimate, the weak
type $(1,1)$ of $M$, and reiterating the well known property of rearrangements, $(fg)^*(t)\le f^*(t/2)g^*(t/2)$,\, $t>0$,  we get using \eqref{Ml} 
\begin{eqnarray*}
&&\o_{\la}(\mathscr{M}_{l,\a}(\vec f_{1,\dots,m-1},g);Q)\le \big(\prod_{i=1}^{m-1}M(f_i\chi_Q)\mathscr{T}_l(g\chi_Q)\big)^*\big(\la|Q|\big)\\
&&\le\prod_{i=1}^{m-1}\big(M(f_i\chi_Q)\big)^*\big(\la|Q|/2^i\big)\big(\mathscr{T}_l(g\chi_Q)\big)^*\big(\la|Q|/2^{m-1}\big)\\
&&\le c(\la,m,n)lg_Q\prod_{i=1}^{m-1}(f_i)_Q,
\end{eqnarray*}
which completes the proof.
\end{proof}

\subsection{Proof of (\ref{eqTl})} By (\ref{estTl}) it is enough to prove (\ref{eqTl}) with ${\mathcal T}_{l,\a}(\vec f\,)$,
for each $\a=1,\dots,2^n$, instead of
${\mathcal T}_{\mathcal S,l}(\vec f\,)$ on the left-hand side.
By the standard limiting argument one can assume that the sum defining ${\mathcal T}_{l,\a}(\vec f\,)$ is finite.
Then the sum defining the corresponding operator $\mathscr{M}_{l,\a}(\vec f_{1,\dots,m-1},g)$ in (\ref{dual}) will be finite too.
This means that the support of $\mathscr{M}_{l,\a}(\vec f_{1,\dots,m-1},g)$ is compact. One can cover it by at most $2^n$ cubes $Q_{\nu}\in {\mathscr{D}}_{\a}$ such that
$$m_{\mathscr{M}_{l,\a}(\vec f_{1,\dots,m-1},g)}(Q_{\nu})=0, \qquad \nu=1,\cdots, 2^n.$$
Applying Theorem \ref{decom1} along with Lemma \ref{oscM}, we get that there exists a
sparse family ${\mathcal S}_{\a}\in {\mathscr{D}}_{\a}(Q_{\nu})$ such that for a.e. $x\in Q_{\nu}$,
\begin{eqnarray*}
&&\mathscr{M}_{l,\a}(\vec f_{1,\dots,m-1},g)(x)\\
&&\le c(m,n)l\Big(\mathcal{M}^{{\mathscr{D}}_{\a}}(\vec f_{1,\dots,m-1},g)(x)+\sum_{Q_j^k\in {\mathcal S}_{\a}}
\Big(\prod_{i=1}^{m-1}(f_i)_{Q_j^k}\Big)g_{Q_j^k}\chi_{Q_j^k}(x)\Big),
\end{eqnarray*}
where
$$
\mathcal{M}^{{\mathscr{D}}_{\a}}(\vec f_{1,\dots,m-1},g)(x)=\sup_{x\in Q\in {\mathscr{D}}_{\a}}\prod_{i=1}^{m-1}(f_i)_Qg_Q.
$$
Applying to this maximal operator the same argument as in the proof of Lemma \ref{appr} and combining with the previous estimate, we get
that there exists two sparse families ${\mathcal S}_{\a,1}$ and ${\mathcal S}_{\a,2}$ from ${\mathscr{D}}_{\a}$ such that
for a.e. $x\in Q_{\nu}$,
$$\mathscr{M}_{l,\a}(\vec f_{1,\dots,m-1},g)(x)\le c(m,n)l\sum_{\kappa=1}^2\sum_{Q_j^k\in {\mathcal S}_{\a,\kappa}}
\Big(\prod_{i=1}^{m-1}(f_i)_{Q_j^k}\Big)g_{Q_j^k}\chi_{Q_j^k}(x).$$
Hence, by H\"older's inequality (\ref{hol}),
\begin{eqnarray*}
\int_{Q_{\nu}}\mathscr{M}_{l,\a}(\vec f_{1,\dots,m-1},g)f_m\,dx&\le& c(m,n)l\sum_{\kappa=1}^2\int_{{\mathbb R}^n}
{\mathcal A}_{{\mathscr{D}_{\a}},{\mathcal S}_{\a,\kappa}}(\vec f\,)g\,dx\\
&\le& 2c(m,n)l\sup_{{\mathscr{D}},{\mathcal S}}\|{\mathcal A}_{{\mathscr{D}},{\mathcal S}}(\vec f\,)\|_{X}\|g\|_{X'}.
\end{eqnarray*}
Summing up over $Q_{\nu}$ and using (\ref{dual}), we get
$$
\int_{{\mathbb R}^n}{\mathcal T}_{l,\a}(\vec f\,)g\,dx\le
2^{n+1}c(m,n)l\sup_{{\mathscr{D}},{\mathcal S}}\|{\mathcal A}_{{\mathscr{D}},{\mathcal S}}(\vec f\,)\|_{X}\|g\|_{X'}.
$$
By (\ref{dua}), taking here the supremum over $g\ge 0$ with $\|g\|_{X'}=1$ gives (\ref{eqTl}) for ${\mathcal T}_{l,\a}(\vec f\,)$, and
therefore the proof is complete.

\section{Proof of Theorem \ref{mult}}
First, we apply Theorem \ref{multcz} with $X=L^{p}(\nu_{\vec w})$ (observe that $p=\frac{m+1}{m}$).
Fix ${\mathcal S}\in{\mathscr{D}}$. Assume that $f_i\ge 0$. By duality
$$
\|{\mathcal A}_{{\mathscr{D}},{\mathcal S}}(\vec{f})\|_{L^{p}(\nu_{\vec w})}
=\sup_{\|g\|_{L^{p'}\big(\nu_{\vec w}^{-1/(p-1)}\big)=1}}\sum_{j,k}\prod_{i=1}^m(f_i)_{Q_j^k}\int_{Q_j^k}g.
$$


Observe that by our choice of $p_i$ we have $p/p'_i=1$. Denote
$$
A_{\vec P}(\vec w;Q)=\Big(\frac{1}{|Q|}\int_Q\nu_{\vec
w}\Big)\prod_{i=1}^m\Big(\frac{1}{|Q|}\int_Q
\si_i\Big).
$$
We have
\begin{eqnarray*}
&&\sum_{j,k}\prod_{i=1}^m(f_i)_{Q_j^k}\int_{Q_j^k}g\\
&&=\sum_{j,k}A_{\vec P}(\vec w;Q_j^k)\Big(\prod_{i=1}^m\frac{1}{\si_i(Q_j^k)}\int_{Q_j^k}f_i\Big)
\Big(\frac{1}{\nu_{\vec w}(Q_j^k)}\int_{Q_j^k}g\Big)|Q_j^k|\\
&&\le 2[\vec w]_{A_{\vec P}}\sum_{j,k}\int_{E_j^k}\prod_{i=1}^mM_{\si_i}^{\mathscr{D}}(f_i\si_i^{-1})M_{\nu_{\vec w}}^{\mathscr{D}}
(g\nu_{\vec w}^{-1})dx\\
&&\le 2[\vec w]_{A_{\vec P}}\int_{{\mathbb R}^n}\prod_{i=1}^mM_{\si_i}^{\mathscr{D}}(f_i\si_i^{-1})M_{\nu_{\vec w}}^{\mathscr{D}}
(g\nu_{\vec w}^{-1})dx.
\end{eqnarray*}
Now we apply H\"older's inequality:
\begin{eqnarray*}
&&\int_{{\mathbb R}^n}\prod_{i=1}^mM_{\si_i}^{\mathscr{D}}(f_i\si_i^{-1})M_{\nu_{\vec w}}^{\mathscr{D}}
(g\nu_{\vec{w}}^{-1})dx\\
&&\le \|\prod_{i=1}^mM_{\si_i}^{\mathscr{D}}(f_i\si_i^{-1})\|_{L^p(\nu_{\vec w}^{-(p-1)})}\|M_{\nu_{\vec w}}^{\mathscr{D}}
(g\nu_{\vec w}^{-1})\|_{L^{p'}(\nu_{\vec w})}.
\end{eqnarray*}
First,
$$\|M_{\nu_{\vec w}}^{\mathscr{D}}(g\nu_{\vec w}^{-1})\|_{L^{p'}(\nu_{\vec w})}\le c\|g\|_{L^{p'}\big(\nu_{\vec w}^{-1/(p-1)}\big)}=c.$$
Second, we apply H\"older's inequality with $p_i/p$ and use that $p-1=\frac{1}{p_i-1}$. We get
$$
\|\prod_{i=1}^mM_{\si_i}^{\mathscr{D}}(f_i\si_i^{-1})\|_{L^p(\nu_{\vec w}^{-(p-1)})}\le \prod_{i=1}^m
\|M_{\si_i}^{\mathscr{D}}(f_i\si_i^{-1})\|_{L^{p_i}(\si_i)}\le c\prod_{i=1}^m\|f_i\|_{L^{p_i}(w_i)},
$$
and we are done.


\begin{thebibliography}{999}

  \bibitem{BS}  C. Bennett and R. Sharpley, {\em Interpolation of Operators}, Academic Press, New York, (1988).
%
  \bibitem{Bu} S.M. Buckley, {\em Estimates for operator norms on weighted spaces and reverse Jensen inequalities}, Trans. Amer. Math. Soc. \textbf{340} (1993), no. 1, 253-–272.
%
  \bibitem{CMP} D. Cruz-Uribe, J.M. Martell and C. P\'erez, {\em  Sharp weighted estimates for classical operators}, Adv. Math., {\bf 229} (2012), no. 1, 408--441.
%
  \bibitem{CMP2} D. Cruz-Uribe, J.M. Martell and C. P\'erez, {\em  Weights, Extrapolation and the Theory of Rubio de Francia,}  Operator Theory, Advances and Applications, Vol. 215, Birkauser,  Basel.
%
  \bibitem{DGPP} O. Dragi\v{c}evi\'c, L. Grafakos, M.C. Pereyra and S. Petermichl, {\em  Extrapolation and sharp norm estimates for classical operators on weighted Lebesgue spaces}, Publ. Math., {\bf 49} (2005), no. 1, 73--91
%
  \bibitem{D} J. Duoandikoetxea, {\em  Extrapolation of weights revisited: new proofs and sharp bounds}, J. Funct. Anal., {\bf 260} (2011), 1886-–1901.
%
  \bibitem{Fu} N. Fujii, {\em Weighted bounded mean oscillation and singular integrals.}, Math. Japon. \textbf{22} (1977/78), no. 5, 529–-534.
%
  \bibitem{GT1} L. Grafakos and R.H. Torres, {\em Multilinear Calder\'on--Zygmund theory}, Adv. Math. \textbf{165} (1) (2002), 124--164.
%
  \bibitem{Hyt} T. Hyt\"onen, {\em The sharp weighted bound for general Calder\'on-Zygmund operators}, Annals of Math. {\bf 175} (2012), no. 3, 1473--1506.
%
  \bibitem{HL}   T.  Hyt\"{o}nen and M. Lacey,  {\em The $A_p-A_{\infty}$ inequality for general Calder\'on-Zygmund operators,} Indiana Univ. Journal of Math.  (to appear).
%
  \bibitem{HLP}   T.  Hyt\"{o}nen, M. Lacey and C. P\'erez,  {\em Sharp weighted bounds for the q-variation of singular integrals,} Bulletin London Math. Soc.  (to appear).
%
  \bibitem{HP} T. Hyt\"onen and C. P\'erez, {\em Sharp weighted bounds involving ${A}_{\infty}$}, Analysis \& PDE, (to appear).
%
  \bibitem{HPR} T.  Hyt\"{o}nen, C. P\'erez and E. Rela,  {\em Sharp Reverse H\"older property for $A_\infty$ weights on spaces of homogeneous type,} J. of Functional Analysis,  (to appear).
%
  \bibitem{Ler3} A.K. Lerner {\em An elementary approach to several results on the Hardy-Littlewood maximal operator,} Proc. Amer.\ Math.\ Soc. {\bf 136}(2008), no. 8, 2829--2833.
%
  \bibitem{L1} A.K. Lerner, {\em  A pointwise estimate for the local sharp maximal function with applications to singular integrals}, Bull. London Math. Soc., {\bf 42} (2010), no. 5, 843--856.
%
 \bibitem{Ler1} A.K. Lerner, {\em On an estimate of Calder\'on-Zygmund operators by dyadic positive operators}, J. Anal. Math., (to appear).
%
 \bibitem{Ler2} A.K. Lerner, {\em  A simple proof of the $A_2$ conjecture}, Int. Math. Res. Not., 2012; doi: 10.1093/imrn/rns145.
%
 \bibitem{LOPTT} A.K. Lerner, S. Ombrosi, C. P\'erez, R.H. Torres and R. Trujillo-Gonz\'alez, {\em New maximal functions and multiple weights for the multilinear     Calderón-Zygmund theory}, Advances in Math. \textbf{220}, 1222-1264 (2009).
%
 \bibitem{Mo1}  K. Moen,   {\em  Weighted inequalities for multilinear fractional integral operators,}  Collect. Math. {\bf 60} (2009), 213-Ð238.
 %
%
  \bibitem{P1} C. P\'erez, {\em On sufficient conditions for  the boundedness of the Hardy--Littlewood maximal operator between weighted $L^{p}$--spaces with different weights}, Proc. of the London Math. Soc. (3) {\bf 71} (1995), 135--157.
%
  \bibitem{Pet1} S. Petermichl, {\em The sharp bound for the Hilbert transform in weighted Lebesgue spaces in terms of the classical $A_p$ characteristic}, Amer. J. Math. {\bf 129} (2007), no. 5, 1355--1375.
  \bibitem{Wil1} J.M. Wilson, {\em Weighted inequalities for the dyadic square function without dyadic $A_{\infty}$.}, Duke Math. J. {\bf 55} (1987), 19-50.

  \end{thebibliography}
\end{document}